\theoremstyle{plain}
\newtheorem{lemma}{Lemma}[section]
\newtheorem{prop}[lemma]{Proposition}
\newtheorem{cor}[lemma]{Corollary}
\newtheorem*{MainTheorem}{MAIN THEOREM}
\theoremstyle{definition}
\newtheorem{example}[lemma]{Example}
\newtheorem{remark}[lemma]{Remark}
\numberwithin{equation}{section} 
\newcommand{\C}{{\mathbb C}}
\newcommand{\Z}{{\mathbb Z}}
\newcommand{\ve}{\varepsilon}
\newcommand{\GL}{\mathrm{GL}}
\newcommand{\Gln}{{\GL_n}}
\newcommand{\Glk}{\GL_k}
\newcommand{\rM}{{\mathrm M}}
\newcommand{\calp}{\mathcal{P}}
\newcommand{\calq}{\mathcal{Q}}
\newcommand{\calr}{\mathcal{R}}
\newcommand{\Mnk}{{\rM_{n,k}}}
\newcommand{\pmnk}{{\mathcal{P}(\Mnk)}}
\newcommand{\bfm}{\mathbf{m}}
\newcommand{\bbf}{\mathbf{B}}
\DeclareMathOperator{\sgn}{sgn}
\DeclareMathOperator{\Gr}{Gr}
\DeclareMathOperator{\wt}{wt}
\newcommand{\LM}{\mathrm{LM}}
\newcommand{\SST}{\mathrm{SST}}
\newcommand{\len}[1]{\ell(#1)}
\newlength{\HeightOfFigure}
\newcommand{\vcentralize}[1]{
\settoheight{\HeightOfFigure}{#1}
\divide\HeightOfFigure by 2
\addtolength{\HeightOfFigure}{-2.5pt}
\lower\HeightOfFigure\hbox{#1}
}
\newcommand{\set}[3][undefined]{\expandafter\ifx\csname#1\endcsname\undefined \left\{#2\,:\,#3\right\}%
\else\csname#1l\endcsname\{#2\,:\,#3\csname#1r\endcsname\}\fi}
\DeclareMathOperator{\diag}{diag}
\DeclareMathOperator{\Span}{Span}
\DeclareMathOperator{\Hom}{Hom}
\newcommand{\sym}[1]{\mathfrak{S}_{#1}} 
\newcommand{\B}{\mathbf{B}}
\newcommand{\Binv}{\B_{\text{\upshape inv}}}
\newcommand{\Bsgn}{\B_{\text{\upshape sgn}}}
\newcommand{\Sinv}{{\mathbf{S}}_{\text{\upshape inv}}}
\newcommand{\Ssgn}{{\mathbf{S}}_{\text{\upshape sgn}}}
\newcommand{\cQ}{\mathcal{Q}}
\newcommand{\cQinv}[1][k]{\cQ^{\sym#1}}
\newcommand{\cQsgn}{\cQ^{\text{\upshape sgn}}}
\newcommand{\triv}{\mathbf{1}} 
\newcommand{\symirrep}[2]{\mathcal{S}_{#1}^{#2}} 
\newcommand{\floor}[1]{\left\lfloor#1\right\rfloor}
\newcommand{\om}[1]{\widehat{#1}} 
\newcommand{\eqed}{\pushQED{\qed}\qedhere\popQED}
\newcommand{\eqspace}{\phantom{{}={}}}
\begin{document}

\title[Highest weight vectors in plethysms]
{Highest weight vectors in plethysms}

\author{Kazufumi Kimoto}
\address{Department of Mathematical Sciences,
University of the Ryukyus,
1 Senbaru, Nishihara, Okinawa 903-0213, JAPAN}
\email{kimoto@math.u-ryukyu.ac.jp}

\author{Soo Teck Lee}
\address{Department of Mathematics, National University of Singapore,
Block S17, 10 Lower Kent Ridge Road, Singapore 119076, Republic of Singapore}
\email{matleest@nus.edu.sg}

\thanks{{
The first named author is partially supported by Grant-in-Aid for Scientific Research (C) No.25400044, JSPS and by JST CREST Grant Number JPMJCR14D6, Japan.
The second named author is supported by NUS grant R-146-000-252-114.
}}

\begin{abstract}
We realize the $\Gln(\C)$-modules $S^k(S^m(\C^n))$ and $\Lambda^k(S^m(\C^n))$
as spaces of polynomial functions on $n\times k$ matrices.
In the case $k=3$, we describe explicitly all the $\Gln(\C)$-highest weight vectors
which occur in $S^3(S^m(\C^n))$ and in $\Lambda^3(S^m(\C^n))$ respectively.
\end{abstract}

\subjclass[2010]{05E10, 20G05}
\keywords{General linear group, symmetric group, highest weight vectors, plethysms}

\maketitle

\section{Introduction}

Let $V_1, V_2$ and $V_3$ be finite dimensional complex vector spaces,
and let $\pi_1\colon\GL(V_1)\to\GL(V_2)$ and $\pi_2\colon\GL(V_2)\to\GL(V_3)$
be polynomial representations of $\GL(V_1)$ and $\GL(V_2)$ respectively.
Then the composition $\pi_2\circ\pi_1\colon\GL(V_1)\to\GL(V_3)$ is also a polynomial representation of $\GL(V_1)$.
If $\chi_1$ and $\chi_2$ are the characters of $\pi_1$ and $\pi_2$ respectively,
then the character of $\pi_2\circ\pi_1$ is called the \emph{plethysm} of $\chi_1$ and $\chi_2$
and is denoted by $\chi_2\circ \chi_1$ (or $\chi_2[\chi_1]$).
One of the main open problems in combinatorics is to express the plethysm $s_\lambda\circ s_\mu$ of two Schur polynomials,
which are characters of irreducible polynomial representations of the general linear groups
with highest weights $\lambda$ and $\mu$ respectively,
as a linear combination of Schur polynomials.

\medskip
Let us look at the case of the complete symmetric polynomials,
i.e. the plethysms of the form $h_k\circ h_m$.
The problem in this case is equivalent to determining the irreducible decomposition of the $\GL_n$-module $S^k(S^m(\C^n))$.
Several explicit results are known,
for instance,
\begin{equation*}
h_k\circ h_2=\sum_{\mu}s_\mu
\end{equation*}
for an arbitrary positive integer $k$, where $\mu$ runs through all the even partitions of $2k$.
The formula is actually equivalent to the identity
\begin{equation*}
\sum_{\text{$\mu$:even}}s_\mu=\prod_{i\le j}(1-x_ix_j)^{-1}
\end{equation*}
due to Littlewood \cite{L1950}.
There are also results on $h_2\circ h_m$, $h_3\circ h_m$ and $h_4\circ h_m$
for an arbitrary positive integer $m$:
the first and second case are due to Thrall \cite{T1942} (see also \cite{P1972}),
and the third case is due to Foulkes \cite{Foulkes1954}.
On the other hand, using representation theory and in particular \emph{$(\GL_n,\GL_m)$-duality}, 
Howe \cite{H1} describes the multiplicities in $S^k(S^m(\C^n))$ for $k\le 4$.
An example of recent developments of plethysms is \cite{dBPW}, which studies certain stability conditions on the coefficients in the expansion with respect to Schur polynomials by using combinatorics on tableaux whose entries are also tableaux.

\medskip

In this paper, we study plethysms using representation theory and an approach inspired by \cite{H1}.
We realize the $\Gln$-modules $S^k(S^m(\C^n))$ and $\Lambda^k(S^m(\C^n))$ as spaces of polynomial functions.
In the case when $k=3$, we obtain all the $\Gln$-highest weight vectors
which occur in $S^3(S^m(\C^n))$ and in $\Lambda^3(S^m(\C^n))$ respectively.
Our results on the explicit highest weight vectors in of $S^3(S^m(\C^n))$ and $\Lambda^3(S^m(\C^n))$
give more refined information than multiplicities on the structure of these modules. 

\medskip

We now describe our approach and results in more details. 
Assume that $n\ge k$ and let $\sym k$ denote the symmetric group on $[k]=\{1,2,\dots,k\}$. Following Howe \cite{H1}, 
we consider an algebra $\calq$ of polynomial functions on the space  of all $n\times k$ complex matrices
with the properties that $\Gln\times\sym k$ acts on $\calq$ by algebra homomorphisms, and $\calq$ has a decomposition
\[
\calq\cong\bigoplus_{m=0}^\infty \calq_m
\]
where for each $m$, $\calq_m\cong S^m(\C^n)^{\otimes k}$ as a $\Gln$-module,
and $\sym k$ acts on $\calq_m$ by permuting the factors.
Next, we define
\begin{align*}
\calq^{\sym k}&=\set{f\in\calq}{\tau.f=f,\; \forall \tau\in \sym k}, \\
\calq^{\sgn}&=\set{f\in\calq}{\tau.f=(\sgn\tau)f,\; \forall \tau\in \sym k}.
\end{align*}
Then $\calq^{\sym k}$ and $\calq^{\sgn}$ have decompositions
\[
\calq^{\sym k}
=\bigoplus_{m=0}^\infty \calq^{\sym k}_m,\quad \calq^{\sgn}
=\bigoplus_{m=1}^\infty \calq^{\sgn}_m
\]
where for each $m\ge 0$,
\[
\calq^{\sym k}_m \cong \bigl(S^m(\C^n)^{\otimes k}\bigr)^{\sym k}\cong S^k(S^m(\C^n)),
\]
and for each $m\ge 1$,
\[
\calq^{\sgn}_m\cong \bigl(S^m(\C^n)^{\otimes k}\bigr)^{\sgn} \cong \Lambda^k(S^m(\C^n)).
\]
In this way, we obtain explicit models for  $S^k(S^m(\C^n))$ and $\Lambda^k(S^m(\C^n))$. 

\medskip

Our goal is to determine the $\Gln$-highest weight vectors
in the spaces $S^k(S^m(\C^n))$ and $\Lambda^k(S^m(\C^n))$.
Let $U_n$ be the standard maximal unipotent subgroup of $\Gln$,
and consider the algebra $(\cQinv)^{U_n}$
and the space $(\cQsgn)^{U_n}$ of $U_n$-invariants in $\cQinv$ and $\cQsgn$ respectively. 
If suitable bases of $(\cQinv_m)^{U_n}$ and $(\cQsgn_m)^{U_n}$ are known,
then they can be used to describe all the $\Gln$-highest weight vectors
which occur in $S^k(S^m(\C^n))$ and in $\Lambda^k(S^m(\C^n))$.

\medskip

We shall implement the above procedure for the case $k=3$.
In this case, the algebra $\calq$ is a subalgebra of the polynomial algebra
on the variables $x_{ij}$, $1\le i\le n,\ 1\le j\le 3$.
For $1\le i,j\le 3$, let 
\[
\delta_{ij}=
\begin{vmatrix}
x_{1i}&x_{1j}\\
x_{2i}&x_{2j}
\end{vmatrix}.
\]
Then we show that the algebra $\calq^{U_n}$ of $U_n$-invariants in $\calq$ is generated by
\[
\alpha_1=x_{11}x_{12}x_{13},\quad \beta_2=x_{13}\delta_{12},
\quad \beta_3=x_{12}\delta_{13},
\quad\gamma_1=
\begin{vmatrix}
x_{11}&x_{12}&x_{13}\\
x_{21}&x_{22}&x_{23}\\
x_{31}&x_{32}&x_{33}
\end{vmatrix},\quad
\gamma_2=\delta_{12}\delta_{13}\delta_{23}.
\] 

\begin{MainTheorem}
Let
\[
\alpha_2=\beta^2_2+\beta^2_3-\beta_2\beta_3,\quad
\alpha_3=2(\beta^3_2+\beta^3_3)-3(\beta^2_2\beta_3+\beta_2\beta^2_3).
\]
\begin{enumerate}[(i)]
\item The set
\[
\Binv
=\set[big]{\alpha^a_1\alpha^b_2\alpha^c_3\gamma_1^{2d+f}\gamma_2^{2e+f}}{a,b,d,e\in\Z_{\ge 0}, c,f\in\{0,1\}}
\]
is a basis for $(\calq^{\sym3})^{U_n}$.
\item The set  
\begin{align*}
\Bsgn&=\set[big]{\alpha^a_1\alpha^b_2\alpha^c_3\gamma_1^{2d+1}\gamma_2^{2e}}{a,b,d,e\in\Z_{\ge 0}, c\in\{0,1\}}\\
&\qquad{}\cup
\set[big]{\alpha^a_1\alpha^b_2\alpha^c_3\gamma_1^{2d}\gamma_2^{2e+1}}{a,b,d,e\in\Z_{\ge 0}, c\in\{0,1\}}
\end{align*}
is a basis for $(\calq^{\sgn})^{U_n}$.
\end{enumerate}
\end{MainTheorem}

Let $A_n$ be the diagonal torus of $\Gln$.
For $\lambda=(\lambda_1,\dots,\lambda_n)\in\Z^n$,
let $\psi^\lambda_n\colon A_n\to\C$ be defined by
\begin{equation}\label{eq:psilambda}
\psi^\lambda_n(t)=t_1^{\lambda_1}\cdots t_n^{\lambda_n},\quad\forall t=\diag(t_1,\dots,t_n)\in A_n,
\end{equation}
where $\diag(t_1,\dots,t_n)$ denotes the $n\times n$ diagonal matrix with diagonal entries $t_1,\dots,t_n$.
If $f\in \calq_m$ and $t.f=\psi^\lambda(t)f$, then we write
\begin{equation}\label{eq:grwt}
\Gr(f)=m,\quad \text{and}\quad \wt(f)=\lambda.
\end{equation}
For a Young diagram $D$, we shall denote the number of rows in $D$ by $\ell(D)$ and the number of boxes in $D$ by $|D|$.
It is well known that the irreducible polynomial representations of $\Gln$ are labeled
by the set of all Young diagrams $D$ with $\ell(D)\le n$ (\cite{F,GW,H2}).
Specifically, for such a Young diagram $D$, we identify it with the element $(\lambda_1,\dots,\lambda_n)$ of $\Z^n$
where for each $i$, $\lambda_i$ is the number of boxes in the $i$-th row of $D$,
and denote the irreducible representation of $\Gln$ with highest weight $\psi^D_n$ by $\rho^D_n$.
It follows from the main theorem that the set
\[
\Binv(m,D)=\set{f\in\Binv}{\Gr(f)=m,\ \wt(f)=D}
\]
is a basis for the space of highest weight vectors of weight $\psi^D_n$ in $S^3(S^m(\C^n))$,
and the set 
\[
\Bsgn(m,D)=\set{f\in\Bsgn}{\Gr(f)=m,\ \wt(f)=D}
\]
is a basis for the space of highest weight vectors of weight $\psi^D_n$ in $\Lambda^3(S^m(\C^n))$.
In particular, the cardinality of $\Binv(m,D)$ and the cardinality of $\Binv(m,D)$
give the multiplicity of $\rho^D_n$ in $S^3(S^m(\C^n))$ and in $\Lambda^3(S^m(\C^n))$ respectively.

\subsection*{General convention}

Let $G$ be a group.
We denote by $\triv=\triv_G$ the trivial representation of $G$.
For a $G$-module $V$, we denote by $V^G$ the subspace consisting of all $G$-invariants in $V$.
Namely,
\begin{equation*}
V^G:=\set{v\in V}{g.v=v,\;\forall g\in G}.
\end{equation*}
When $V$ is also an $\sym k$-module, we put
\begin{equation*}
V^{\sgn}:=\set{v\in V}{\tau.v=(\sgn\tau)v,\; \forall\tau\in\sym k},
\end{equation*}
where $\sgn\tau$ is the sign of $\tau\in\sym k$.
Notice that if the actions of $G$ and $\sym k$ commute with each other,
then $V^G$ is an $\sym k$-submodule, $V^{\sym k}$ and $V^{\sgn}$ are $G$-submodules, and
\begin{equation*}
(V^G)^{\sym k}=(V^{\sym k})^G,\qquad
(V^G)^{\sgn}=(V^{\sgn})^G.
\end{equation*}

\section{Spaces of $U_n$-invariants}\label{sec:uninv}

In this section, we realize the $\Gln$-modules $S^k(S^m(\C^n))$ and $\Lambda^k(S^m(\C^n))$
as spaces of polynomial functions and study the highest weight vectors which occur in these spaces.

\subsection{The algebras $\calp$ and $\calq$} 

This subsection is based on \cite{H1}. 
Let $\calp=\pmnk$ be the algebra of polynomial functions on the space $\Mnk=\Mnk(\C)$ of all $n\times k$ complex matrices,
and let $\Gln \times \Glk$ act on $\calp$ by
\begin{equation} \label{glnk}
[(g,h) \cdot f](X)=f(g^tXh)
\end{equation}
where $(g,h) \in \Gln \times \Glk$, $f \in \pmnk$, and $X \in \Mnk$.
We restrict this action to the subgroup $\Gln\times A_k$ of $\Gln\times\Glk$.
Then by a standard argument using $(\Gln,\Glk)$-duality (\cite{H2}),
we see that $\calp$ has a decomposition into a direct sum of $\Gln\times A_k$-submodules
\begin{equation}\label{eq:pdecom}
\calp=\bigoplus_{\bfm\in\Z^k_{\ge 0}}\calp_{\bfm}
\end{equation}
where $\Z_{\ge 0}$ denotes the set of all nonnegative integers, and for each $\bfm=(m_1,\dots,m_k)\in\Z^k_{\ge 0}$, 
\[
\calp_{\bfm}\cong S^{m_1}(\C^n)\otimes\cdots\otimes S^{m_k}(\C^n)
\]
as a $\Gln$-module and $A_k$ acts on $\calp_{\bfm}$ by the character $\psi^\bfm_k$, i.e.
\[
\calp_\bfm=\set{f\in\calp}{t.f=\psi^{\bfm}_k(t)f,\, \forall t\in A_k}.
\]
Thus equation \eqref{eq:pdecom} defines a graded algebra structure on $\calp$
graded by the semigroup $\Z^k_{\ge 0}$.

\medskip  
For each $m\in\Z_{\ge 0}$, let
\[
(m^k)=(\overbrace{m,m,\dots,m}^k).
\]
Let $\calq$ be the direct sum of all the homogeneous components of $\calp$ of the form $\calp_{(m^k)}$, i.e.
\begin{equation}\label{eq:qdecom}
\calq=\bigoplus_{m\in\Z_{\ge 0}} \calq_m
\end{equation}
where for each $m\in\Z_{\ge 0}$, 
\[
\calq_m=\calp_{(m^k)}\cong S^m(\C^n)^{\otimes k}
=\overbrace{S^m(\C^n)\otimes\cdots\otimes S^m(\C^n)}^k.
\] 
Clearly $\calq$ is a graded subalgebra of $\calp$.
We also note that the symmetric group $\sym k$ acts on each $\calq_m$ by permuting its factors,
and by taking direct sum, we obtain an action of $\sym k$ on the algebra $\calq$ by algebra automorphisms.
In fact, if we identify $\sym k$ with the group of permutation matrices in $\Glk$,
then this action by $\sym k$ coincides with the action obtained by restriction of the action by $\Glk$ defined in equation \eqref{glnk}.

\subsection{The algebra $\calq^{\sym k}$ and the space $\calq^{\sgn}$}

The subalgebra $\calq^{\sym k}$ of $\calq$ consisting of all the $\sym k$-invariants in $\calq$ has a decomposition
\begin{equation}\label{eq:qskdecom}
\calq^{\sym k}=\bigoplus_{m=0}^\infty \calq^{\sym k}_m
\end{equation}
where for each $m\ge 0$,
\[
\calq^{\sym k}_m=\calq^{\sym k}\cap \calq_m\cong\bigl(S^m(\C^n)^{\otimes k}\bigr)^{\sym k}\cong S^k(S^m(\C^n)).
\]
On the other hand, the $\sgn$-isotypic component $\calq^{\sgn}$ of $\calq$ has a decomposition
\begin{equation}\label{eq:qsgndecom}\calq^{\sgn}=\bigoplus_{m=1}^\infty \calq^{\sgn}_m
\end{equation}
where for each $m\ge 1$,
\[
\calq^{\sgn}_m=\calq^{\sgn}\cap \calq_m\cong\bigl(S^m(\C^n)^{\otimes k}\bigr)^{\sgn}\cong \Lambda^k(S^m(\C^n)).
\]
In this way, we obtain explicit models for  $S^k(S^m(\C^n))$ and $\Lambda^k(S^m(\C^n))$. 
Note that $\calq^{\sgn}$ is not a subalgebra of $\calq$.

\subsection{$U_n$-invariants}\label{subsec:uninv}

Let $U_n$ be the subgroup of $\Gln$ consisting of all upper triangular matrices such that all diagonal entries are $1$.
Then $U_n$ is the standard maximal unipotent subgroup of $\Gln$.
A nonzero element of $\calq$ is a $\Gln$-highest weight vector if it is fixed by all elements of $U_n$
and it is an eigenvector for each element of $A_n$.
Thus, in order to determine the $\Gln$-module structure of $\calq^{\sym k}$ and $\calq^{\sgn}$,
we consider the spaces
$\calq^{U_n}$, $(\calq^{\sym k})^{U_n}$ and $(\calq^{\sgn})^{U_n}$
of $U_n$-invariants in $\calq$, $\calq^{\sym k}$ and $\calq^{\sgn}$ respectively.
By equations \eqref{eq:qskdecom} and \eqref{eq:qsgndecom}, we have
\begin{equation}\label{eq:qskundecom}
\bigl(\calq^{\sym k}\bigr)^{U_n}=\bigoplus_{m=0}^\infty \bigl(\calq^{\sym k}_m\bigr)^{U_n}
\end{equation}
where for each $m\ge 0$,
\[
\bigl(\calq^{\sym k}_m\bigr)^{U_n}=\calq^{\sym k}_m\cap\calq^{U_n} \cong S^k(S^m(\C^n))^{U_n},
\]
and 
\begin{equation}\label{eq:qsgnundecom}
\bigl(\calq^{\sgn}\bigr)^{U_n}=\bigoplus_{m=1}^\infty \bigl(\calq^{\sgn}_m\bigr)^{U_n}
\end{equation}
where for each $m\ge 1$,
\[
\bigl(\calq^{\sgn}_m\bigr)^{U_n}=\calq^{\sgn}_m\cap\calq^{U_n} \cong \Lambda^k(S^m(\C^n))^{U_n}.
\]
Hence, the structure of the algebra $(\calq^{\sym k}_m)^{U_n}$
and the space $(\calq^{\sgn}_m)^{U_n}$ encode information
on the $\Gln$-module structure of $S^k(S^m(\C^n))$ and $\Lambda^k(S^m(\C^n))$.
In particular, if bases of
$(\calq^{\sym k}_m)^{U_n}$ and $(\calq^{\sgn}_m)^{U_n}$
which are compatible with the decompositions \eqref{eq:qskundecom} and \eqref{eq:qsgnundecom} are known,
then they can be used to describe all the $\Gln$-highest weight vectors which occur in $S^k(S^m(\C^n))$ and in 
$\Lambda^k(S^m(\C^n))$.

\subsection{A basis for $\calq^{U_n}$}  

Since the action by $U_n$ and $\sym k$ on $\calq$ commute with each other, 
$\calq^{U_n}$ is a $\sym k$-module and 
we have
\begin{equation}\label{eq:qs3uncommute}
\bigl(\calq^{\sym k}\bigr)^{U_n}=\bigl(\calq^{U_n}\bigr)^{\sym k}
\quad\text{and}\quad
(\calq^{\sgn})^{U_n}=(\calq^{U_n})^{\sgn}.
\end{equation}
In this subsection, we shall describe a  basis for $\calq^{U_n}$. 
The standard monomial theory for $\Gln$ specifies a basis for 
the algebra $\calp^{U_n}$ of $U_n$-invariants in $\calp$,
and the elements of this basis are certain monomials on a set of generators called \emph{standard monomials}.
The basis elements which belong to the subalgebra $\calq^{U_n}$ forms a basis for $\calq^{U_n}$.

\medskip
Recall that $\calp=\pmnk$ is the algebra of all polynomial functions on the space $\Mnk=\Mnk(\C)$ of $n\times k$ complex matrices.
We shall write a typical element of $\Mnk$ as
\[
X=
\begin{pmatrix}
x_{11}&x_{12}&\cdots&x_{1k}\\
x_{21}&x_{22}&\cdots&x_{2k}\\
\vdots&\vdots&&\vdots\\
x_{n1}&x_{n2}&\cdots&x_{nk}
\end{pmatrix},
\]
so that $\pmnk$ can be identified as the polynomial algebra on the variables $x_{ij}$.
We note that for $g=(g_{ij})\in\GL_n$ and $h=(h_{ij})\in\GL_k$, we have
\begin{equation*}
(g,h).x_{ij}=\sum_{s=1}^n\sum_{t=1}^k g_{is}h_{tj}x_{st}
\end{equation*}
for $1\le i\le n$ and $1\le j\in k$.
In particular, we may identify a permutation $\tau\in\sym k$  with the permutation matrix $(\delta_{i\sigma(j)})$ in $\GL_k$,
which acts on $\calp$ by
\begin{equation*}
\tau.x_{ij}=x_{i\tau(j)}.
\end{equation*}

\medskip

For a Young diagram $D$ and $\bfm\in\Z^k_{\ge 0}$, let
$\SST(D,\bfm)$ denote the set of all semistandard tableaux of shape $D$ and content $\bfm$, and let
\[
K_{D,\bfm}=\text{the cardinality of $\SST(D,\bfm)$.}
\]
The nonnegative integer $K_{D,\bfm}$ is called a \emph{Kostka number} (\cite{F}).

\medskip

From now on, we shall assume that $n\ge k$. Let $D$ be a Young diagram with  $\ell(D)\le k$, and let 
$\bfm\in\Z^k_{\ge 0}$.
For each  $T\in\SST(D,\bfm)$, we  shall associate with $T$ a polynomial $\delta_T$ in $\calp$ as follows:
If $T$ consists of a single column with entries $I=(i_1,\dots,i_s)$ where $i_1<\cdots<i_s$, then we let
\[
\delta_T =
\begin{vmatrix}
x_{1i_1}&x_{1i_2}&\cdots&x_{1i_s}\\
x_{2i_1}&x_{2i_2}&\cdots&x_{2i_s}\\
\vdots&\vdots&&\vdots\\
x_{si_1}&x_{si_2}&\cdots&x_{si_s}
\end{vmatrix}.
\]
In general, if $T_1,T_2,\dots,T_p$ are the columns of $T$ from left to right, then we define
\[\delta_T=\delta_{T_1}\delta_{T_2}\cdots \delta_{T_p}.\]
We also let $m_T$ be the monomial in $\calp$ given by 
\[m_T=\prod_{i,j}x_{ij}^{e_{ij}}\]
where
\[e_{ij}=\text{the number of boxes in the $i$-th row of $T$ labeled with the number $j$}.\]

\medskip

Next, we let the set of monomials in $\calp$ be given the \emph{graded lexicographic order} (\cite{CLO}) such that 
\[
x_{ab}>x_{cd}
\quad \text{if and only if}\quad
\begin{cases}
\;\text{either  (i)  $b<d$ }\\[.25em]
\;\text{or (ii) $b=d$ and $a<c$.}
\end{cases}
\]
Thus under this monomial ordering, we have
\[x_{11}>x_{21}>\cdots>x_{n1}>x_{12}>x_{22}>\cdots>x_{nk}.\]
If $f$ is a nonzero element of $\calp$, then we shall denotes the leading monomial of $f$
with respect to the above monomial ordering by $\LM(f)$.
Notice that $\LM(fg)=\LM(f)\LM(g)$ for any nonzero elements $f,g$ of $\calp$.

\medskip

The following results are well known (\cite{HKL,HL,Le}). 

\begin{prop} \label{qmdbasis}
Let $\bfm=(m_1,\dots,m_k)\in\Z^k_{\ge 0}$.
\begin{enumerate}[(i)]
\item
As a representation of $\Gln$,
\[
\calp_{\bfm}
\cong S^{m_1}(\C^n)\otimes\cdots\otimes S^{m_k}(\C^n)
=\bigoplus_{\ell(D)\le k}K_{D,\bfm}\,\rho^D_n.
\]

\item
If $T\in\SST(D,\bfm)$, then 
\[
\LM(\delta_T)=m_T.
\]
Moreover, the set 
\begin{equation}\label{eq:bmd}
\bbf_{\bfm,D}=\set{\delta_T}{T\in \SST(D,\bfm)}
\end{equation}
is a basis for $\calp^{U_n}_{\bfm, D}$. 
\end{enumerate}
\end{prop}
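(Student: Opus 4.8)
The plan is to prove the two parts separately, both along classical lines.

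For part~(i), I would first identify $\calp_\bfm$ concretely: since $A_k$ scales the $j$-th column of $X$ by $t_j$ (cf.\ \eqref{glnk}), the weight-$\bfm$ eigenspace $\calp_\bfm$ consists exactly of the polynomials that are homogeneous of degree $m_j$ in the variables $x_{1j},\dots,x_{nj}$ for each $j$; as a $\Gln$-module this is visibly $S^{m_1}(\C^n)\otimes\cdots\otimes S^{m_k}(\C^n)$, the linear span of $x_{1j},\dots,x_{nj}$ carrying the standard representation. To obtain the irreducible decomposition, I would restrict the $(\Gln,\Glk)$-duality decomposition of $\calp$ to $\Gln\times A_k$: the multiplicity of $\rho^D_n$ in $\calp_\bfm$ equals the dimension of the $\bfm$-weight space of the irreducible $\Glk$-module of highest weight $D$, which is the Kostka number $K_{D,\bfm}$. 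Equivalently, one may just note that the character of $S^{m_1}(\C^n)\otimes\cdots\otimes S^{m_k}(\C^n)$ is $h_{m_1}\cdots h_{m_k}=\sum_D K_{D,\bfm}s_D$.

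For the leading-monomial claim in (ii), I would first treat a single column $T$ with rows $\{1,\dots,s\}$ and column set $I=(i_1<\cdots<i_s)$. Expanding, $\delta_T=\sum_{\tau\in\sym s}(\sgn\tau)\prod_{q=1}^s x_{\tau(q),i_q}$, and distinct $\tau$ give distinct monomials. The whole point of the chosen graded lexicographic order is that, because $i_1<\cdots<i_s$, for every $\tau$ one has $x_{\tau(1),i_1}>x_{\tau(2),i_2}>\cdots>x_{\tau(s),i_s}$; so the monomial attached to $\tau$ is already listed in decreasing order, and comparing two such monomials amounts to comparing the sequences $(\tau(1),\dots,\tau(s))$ entrywise with smaller entries producing the larger monomial. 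The maximum is therefore attained at $\tau=\mathrm{id}$, giving $\LM(\delta_T)=x_{1,i_1}\cdots x_{s,i_s}=m_T$. For a general $T$ with columns $T_1,\dots,T_p$, multiplicativity of $\LM$ yields $\LM(\delta_T)=\prod_r\LM(\delta_{T_r})=\prod_r m_{T_r}=m_T$.

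For the ``moreover'' assertion I would show $\bbf_{\bfm,D}\subseteq\calp^{U_n}_{\bfm,D}$ and then count. Each factor $\delta_{T_r}$ is a minor on the first $s_r$ rows; for $u\in U_n$ the matrix $u^t$ is lower unitriangular, so the first $s_r$ rows of $u^tX$ are a lower-unitriangular combination of the first $s_r$ rows of $X$, whence $\delta_{T_r}$, and thus $\delta_T$, is $U_n$-invariant. Tracking the torus actions, $\delta_T$ has $A_k$-weight $\bfm$ (the value $j$ lies in exactly $m_j$ of the columns $T_r$, so $\delta_T$ has degree $m_j$ in column $j$) and $A_n$-weight $\psi^D_n$ (scaling the first $s_r$ rows multiplies $\delta_{T_r}$ by $a_1\cdots a_{s_r}$, and $\#\{r:s_r\ge i\}$ is the number of columns of $D$ of length $\ge i$, i.e.\ the length $\lambda_i$ of the $i$-th row of $D$). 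Hence $\delta_T\in\calp^{U_n}_{\bfm,D}$, a space of dimension $K_{D,\bfm}=|\SST(D,\bfm)|$ by part~(i). Finally $T\mapsto m_T$ is injective on $\SST(D,\bfm)$, since $m_T$ records the multiset of entries in each row and a semistandard tableau of fixed shape is determined by its rows; as $\LM(\delta_T)=m_T$, the elements of $\bbf_{\bfm,D}$ have pairwise distinct leading monomials, hence are linearly independent, and their number matches $\dim\calp^{U_n}_{\bfm,D}$, so they form a basis.

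The argument is essentially standard-monomial-theory bookkeeping; the one place that demands care is the leading-monomial computation — checking that this particular order makes the identity permutation win in each column — together with verifying that the $A_n$-weight of $\delta_T$ really is $D$, which uses the conjugate-partition identity $\lambda_i=\#\{j:\lambda'_j\ge i\}$. Once these are settled, the injectivity of $T\mapsto m_T$ and the dimension count from part~(i) close the proof with no further work.
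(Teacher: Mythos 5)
Your proof is correct and complete; note that the paper does not prove this proposition at all — it simply quotes it as well known with references to \cite{HKL,HL,Le} — and your argument (identify $\calp_\bfm$ as the multi\-degree-$\bfm$ component, get the multiplicity $K_{D,\bfm}$ from $(\Gln,\Glk)$-duality, check $U_n$-invariance and the $A_n$- and $A_k$-weights of $\delta_T$, compute $\LM(\delta_T)=m_T$ column by column, and close by injectivity of $T\mapsto m_T$ plus the dimension count) is exactly the standard standard-monomial-theory proof given in those sources. In particular the two points that need care — that the chosen order makes the identity permutation dominate in each column minor, and that the row-scaling weight of $\delta_T$ is $D$ via the conjugate-partition identity — are handled correctly, so there is nothing to add.
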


Recall that for each $m\in\Z_{\ge 0}$, $\calq_m=\calp_{(m^k)}$.
If $D$ is a Young diagram with $\ell(D)\le k$, then we let $\calq_{m,D}^{U_n}=\calp^{U_n}_{(m^k),D}$.
Then $\calq^{U_n}$ has a decomposition 
\begin{equation}\label{eq:qun}
\calq^{U_n}=\bigoplus_{\substack{m\ge0 \\ \ell(D)\le k}}\calq_{m,D}^{U_n}.
\end{equation}

\begin{cor}\label{cor:bmkd}
For each $m\in\Z_{\ge 0}$ and for each Young diagram $D$ with $\ell(D)\le k$,
the set $\bbf_{(m^k),D}$ is a basis for $\calq_{m,D}^{U_n}$. Consequently, the union
\[\bbf_{\calq^{U_n}}=\bigcup_{\stackrel{m\in\Z_{\ge 0}}{\ell(D)\le k }}\bbf_{(m^k),D}\]
is a basis for $\calq^{U_n}$.
\end{cor}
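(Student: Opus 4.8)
The plan is to read off both statements from Proposition~\ref{qmdbasis} together with the direct sum decomposition~\eqref{eq:qun}. For the first assertion I would simply specialize Proposition~\ref{qmdbasis}(ii) to the content $\bfm=(m^k)$. By the very definition $\calq_{m,D}^{U_n}=\calp^{U_n}_{(m^k),D}$, and Proposition~\ref{qmdbasis}(ii) asserts precisely that $\bbf_{(m^k),D}=\set{\delta_T}{T\in\SST(D,(m^k))}$ (see~\eqref{eq:bmd}) is a basis of $\calp^{U_n}_{(m^k),D}$; so there is nothing to do here beyond unwinding notation, noting that for $\ell(D)\le k$ a semistandard tableau of shape $D$ and content $(m^k)$ can exist only when $|D|=mk$, so that the set is empty unless the shape has the right number of boxes.

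For the second assertion I would invoke~\eqref{eq:qun}, which presents $\calq^{U_n}$ as the internal direct sum $\bigoplus_{m\ge 0,\ \ell(D)\le k}\calq_{m,D}^{U_n}$. A union of bases of the summands of an internal direct sum is again a basis, provided the union is genuinely disjoint; here that is automatic, because distinct summands of a direct sum meet only in $0$ while every $\delta_T$ is a nonzero basis vector, and within a fixed $\bbf_{(m^k),D}$ the $\delta_T$ are pairwise distinct (a basis is by definition a set of distinct vectors). Concretely, the pair $(m,D)$ is recoverable from $\delta_T\in\calp_{(m^k),D}$: its polynomial degree equals $mk$, which determines $m$ since $k$ is fixed, and it is an $A_n$-weight vector of weight $\psi^D_n$, which determines $D$; this is the same bookkeeping that underlies the directness of~\eqref{eq:qun}, and one may also see the distinctness within a single $\bbf_{(m^k),D}$ from $\LM(\delta_T)=m_T$ and the injectivity of $T\mapsto m_T$ on $\SST(D,(m^k))$.

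Since the statement is a formal consequence of Proposition~\ref{qmdbasis} and~\eqref{eq:qun}, I do not anticipate a genuine obstacle; the only real content is the elementary bookkeeping that makes the displayed union a disjoint union of bases, handled by the degree/weight argument above. If one preferred not to quote~\eqref{eq:qun}, one could reprove it on the spot: apply $(-)^{U_n}$, which commutes with direct sums, to the $\Gln$-decomposition $\calp_{(m^k)}\cong\bigoplus_{\ell(D)\le k}K_{D,(m^k)}\,\rho^D_n$ from Proposition~\ref{qmdbasis}(i) to obtain $\calq_m^{U_n}=\bigoplus_{\ell(D)\le k}\calq_{m,D}^{U_n}$ with $\dim\calq_{m,D}^{U_n}=K_{D,(m^k)}$, and then sum over $m$ using $\calq=\bigoplus_m\calp_{(m^k)}$.
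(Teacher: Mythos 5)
Your proposal is correct and follows the same route the paper intends: the first assertion is Proposition~\ref{qmdbasis}(ii) specialized to $\bfm=(m^k)$ via the definition $\calq_{m,D}^{U_n}=\calp^{U_n}_{(m^k),D}$, and the second follows by taking the union of these bases over the direct sum decomposition~\eqref{eq:qun}. The extra bookkeeping you supply (recovering $(m,D)$ from degree and weight, and distinctness via $\LM(\delta_T)=m_T$) is harmless and only makes explicit what the paper leaves implicit.
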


\section{The case $m=1$ --- Schur-Weyl duality}\label{sec:schurweyl}

For a Young diagram $D$ with $k$ boxes,
we denote by $\symirrep{k}{D}$ the irreducible $\sym k$-module corresponding to $D$.
Then by Schur-Weyl duality (\cite{H2,GW}),
\begin{equation*}
\cQ_1\cong(\C^n)^{\otimes k}\cong\bigoplus_{ |D|=k  }\rho_n^D\otimes\symirrep{k}{D}
\end{equation*}
as a representation for $\Gln\times\sym k$. Consequently,
\begin{equation*}
\cQ_1^{U_n}=\bigoplus_{ |D|=k  }\cQ^{U_n}_{1,D} \cong\bigoplus_{ |D|=k  } (\rho_n^D)^{U_n}\otimes\symirrep{k}{D}.
\end{equation*}
In particular, for each Young diagram $D$ with $k$ boxes,
we can realize the irreducible $\sym k$ module $\symirrep{k}{D}$   as the space $\cQ^{U_n}_{1,D}$ and 
the set $\set{\delta_T}{T\in\SST(D,(1^k))}$ forms a basis for $\symirrep{k}{D}$.

\begin{remark}
Consider the $\C$-algebra homomorphism $\calp\to\C[z_1,\dots,z_k]$ defined by $x_{ij}\mapsto z_j^{i-1}$.
By this homomorphism, for each standard tableau $T\in\SST(D,(1^k))$ on a $k$-box Young diagram $D$,
$\delta_T$ is sent to the \emph{Specht polynomial}
\begin{equation*}
F_T(z_1,\dots,z_k):=\prod_{j=1}^{\len\mu}\prod_{1\le p<q\le \mu_j}(z_{T(q,j)}-z_{T(p,j)}),
\end{equation*}
where $\mu$ is a partition associated to the conjugate diagram of $D$.
It is well known that
the Specht polynomials $F_T(z_1,\dots,z_k)$ $(T\in\SST(D,(1^k)))$
form a basis of an irreducible $\sym k$-submodule in $\C[z_1,\dots,z_k]$
isomorphic to $\symirrep{k}{D}$ (see, e.g. \cite{F}).
In this sense, our $\delta_T$ is regarded as a refinement of the Specht polynomial $F_T$.
\end{remark}

\section{The case $k=2$}

In this section, we determine all the $\Gln$-highest weight vectors which occur in 
$S^2(S^m(\C^n))$ and in $\Lambda^2(S^m(\C^n)$. 

\begin{prop}
Assume that $k=2$ and let 
\begin{equation*}
\alpha=x_{11}x_{12},\quad
\gamma=\delta_{12}.
\end{equation*}
\begin{enumerate}
\item[(i)] 
The algebra $\calq^{U_n}$ is generated by $\{\alpha,\gamma\}$
 and the set   $\set{\alpha^{m-a}\gamma^a}{0\le a\le m}$ forms a basis for $\calq^{U_n}$.
 \item[(ii)] For each positive integer $m$,  we have
 \begin{equation*}
 \calq_m^{\sym2}\cong S^2(S^m(\C^n))
\cong\bigoplus_{\substack{0\le a\le m\\\text{$a$:\,even}}}\rho_n^{(2m-a,a)},
 \end{equation*}
 and for each even integer $a$ such that $0\le a\le m$, $\alpha^{m-a}\gamma^a$ is a $\Gln$-highest weight vector in 
 $\rho_n^{(2m-a,a)}$ (which is unique up to a scalar multiple).
   \item[(iii)] For each positive integer $m$,  we have
 \begin{equation*}
 \calq_m^{\sgn}\cong \Lambda^2(S^m(\C^n))
\cong\bigoplus_{\substack{1\le a\le m\\\text{$a$:\,odd}}}\rho_n^{(2m-a,a)}. 
 \end{equation*}
 and for each odd integer $a$ such that $1\le a\le m$, $\alpha^{m-a}\gamma^a$ is a $\Gln$-highest weight vector in 
 $\rho_n^{(2m-a,a)}$ (which is unique up to a scalar multiple).
 \item[(iv)] We have
 \begin{equation*}
\calq^{\sym2}=\C[\alpha,\gamma^2],\quad
\calq^{\sgn}=\gamma\calq^{\sym2}=\gamma\C[\alpha,\gamma^2].
\end{equation*}
\end{enumerate}
\end{prop}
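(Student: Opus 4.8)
The plan is to read everything off from the standard monomial basis of $\calq^{U_n}$ furnished by Corollary~\ref{cor:bmkd} (specialized to $k=2$), and then to pass to the $\sym2$-isotypic components.

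First I would make part~(i) explicit. By Corollary~\ref{cor:bmkd} together with Proposition~\ref{qmdbasis}, the union $\bigcup_{\ell(D)\le2}\bbf_{(m,m),D}$ is a basis of $\calq^{U_n}_m$, and the only Young diagrams $D$ occurring — those with $\ell(D)\le2$ and $\SST(D,(m,m))$ nonempty, hence $|D|=2m$ — are $D=(2m-a,a)$ with $0\le a\le m$. I would check that for each such $D$ there is exactly one semistandard tableau $T_a$ of content $(m,m)$, namely the one whose first row is $m$ entries $1$ followed by $m-a$ entries $2$ and whose second row is $a$ entries $2$, so that $K_{D,(m,m)}=1$; reading its columns from left to right ($a$ columns with entries $(1,2)$, then $m-a$ columns with single entry $1$, then $m-a$ columns with single entry $2$) gives $\delta_{T_{a}}=\delta_{12}^{a}\,x_{11}^{m-a}\,x_{12}^{m-a}=\gamma^{a}\alpha^{m-a}$. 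Hence $\set{\alpha^{m-a}\gamma^{a}}{0\le a\le m}$ is a basis of $\calq^{U_n}_m$, and its union over all $m$ is a basis of $\calq^{U_n}$; since $\alpha=\alpha^{1}\gamma^{0}$ and $\gamma=\alpha^{0}\gamma^{1}$ already appear in this basis and every basis vector is a monomial in $\alpha$ and $\gamma$, it follows that $\calq^{U_n}=\C[\alpha,\gamma]$, with $\alpha$ and $\gamma$ algebraically independent (and the stated basis linearly independent) because $\wt(\alpha^{i}\gamma^{j})=(2i+j,j,0,\dots,0)$ determines $(i,j)$.

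For parts~(ii) and~(iii) I would compute the action of the nontrivial element $\tau\in\sym2$: from $\tau.x_{i1}=x_{i2}$ and $\tau.x_{i2}=x_{i1}$ we get $\tau.\alpha=\alpha$ and $\tau.\gamma=-\gamma$, hence $\tau.(\alpha^{m-a}\gamma^{a})=(-1)^{a}\alpha^{m-a}\gamma^{a}$. Combined with part~(i) and the identities~\eqref{eq:qs3uncommute}, this shows that $(\calq^{\sym2}_m)^{U_n}$ has basis $\set{\alpha^{m-a}\gamma^{a}}{0\le a\le m,\ a\text{ even}}$ and $(\calq^{\sgn}_m)^{U_n}$ has basis $\set{\alpha^{m-a}\gamma^{a}}{0\le a\le m,\ a\text{ odd}}$. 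Since $\calq^{\sym2}_m\cong S^2(S^m(\C^n))$ and $\calq^{\sgn}_m\cong\Lambda^2(S^m(\C^n))$ are polynomial $\Gln$-modules whose highest weight vectors of a prescribed weight are exactly their nonzero $U_n$-fixed $A_n$-weight vectors (so that the multiplicity of $\rho^D_n$ equals the dimension of the $D$-weight space of the $U_n$-invariants), and since $\wt(\alpha^{m-a}\gamma^{a})=(2m-a,a)$, the decompositions claimed in~(ii) and~(iii) follow, with $\alpha^{m-a}\gamma^{a}$ spanning the one-dimensional highest weight space of each occurring copy of $\rho_n^{(2m-a,a)}$.

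Finally, part~(iv) is a repackaging of the above. Summing over $m$, a basis of $(\calq^{\sym2})^{U_n}$ is $\set{\alpha^{m-a}\gamma^{a}}{m\ge0,\ 0\le a\le m,\ a\text{ even}}$; writing $a=2b$ identifies this with $\set{\alpha^{c}(\gamma^{2})^{b}}{b,c\in\Z_{\ge0}}$, which is the monomial basis of the polynomial ring $\C[\alpha,\gamma^{2}]$ (again a polynomial ring by the same weight argument), so $(\calq^{\sym2})^{U_n}=\C[\alpha,\gamma^{2}]$; writing instead $a=2b+1$ presents each element of the $\sgn$-basis as $\gamma\,\alpha^{c}(\gamma^{2})^{b}$ with $b,c\in\Z_{\ge0}$, so $(\calq^{\sgn})^{U_n}=\gamma\,\C[\alpha,\gamma^{2}]$, a free rank-one module over $(\calq^{\sym2})^{U_n}$. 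I do not expect a genuine obstacle; the steps that need care are the explicit identification $\delta_{T_{a}}=\gamma^{a}\alpha^{m-a}$ (including the fact that $K_{D,(m,m)}=1$) from the column structure of the unique semistandard tableau, and correctly tracking the sign in $\tau.\gamma=-\gamma$.
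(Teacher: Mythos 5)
Your proposal is correct and follows essentially the same route as the paper: it invokes the standard monomial basis from Corollary~\ref{cor:bmkd}, identifies the unique semistandard tableau of shape $(2m-a,a)$ and content $(m,m)$ so that $\delta_T=\alpha^{m-a}\gamma^a$, and then sorts these basis vectors by the sign $(-1)^a$ under the transposition, using \eqref{eq:qs3uncommute} and the weight $(2m-a,a)$ to read off (ii)--(iv). The extra details you supply (the explicit Kostka number $K_{D,(m,m)}=1$ and the weight argument for algebraic independence) are consistent with, and only slightly amplify, the paper's argument.
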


\begin{proof}
Fix a positive integer $m$ and a Young diagram $D$ with $\ell(D)\le 2$ and $|D|=2m$.
We shall show that for each $T\in \SST(D,(m^2))$,
$\delta_T$ is of the form
\begin{equation}
\delta_T=\alpha^a\gamma^c
\end{equation}
where $a,c\in\Z_{\ge 0}$.

\medskip
If $D$ has only one row and $T\in \SST(D,(m^2))$, then it is clear that $\delta_T=\alpha^m$.

\medskip
If $D$ has two rows, then $D=(2m-a,a)$ for some $1\le a\le m$. 
In this case, if $T\in\SST(D,(m^2))$,
then $T$ must be the tableau
\begin{equation*}
T=\vcentralize{\includegraphics{./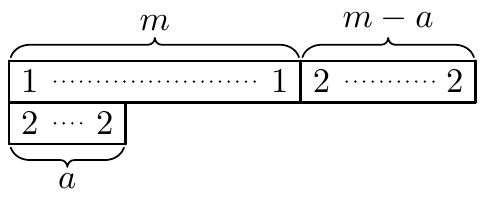}},
\end{equation*}
and so
\begin{equation*}
\delta_T=\delta_{12}^a(x_{11}x_{12})^{m-a}=\alpha^{m-a}\gamma^a.
\end{equation*}
Thus,
\[\bbf_{(m^2),D}=\set{\alpha^{m-a}\gamma^a}{0\le a\le m}\]
is a basis for $(\calq_m)^{U_n}$
and 
\begin{equation*}
\calq_m\cong S^m(\C^n)\otimes S^m(\C^n)\cong\bigoplus_{0\le a\le m}\rho_n^{(2m-a,a)}.
\end{equation*}
Part (i) follows from this and Corollary \ref{cor:bmkd}.

\medskip
Next, since
 \begin{equation*}
(1\,2). \alpha^{m-a}\gamma^a=(-1)^a \alpha^{m-a}\gamma^a,
\end{equation*}
we have
\begin{equation*}
\alpha^{m-a}\gamma^a\in\begin{cases}
(\calq_m^{\sym2})^{U_n} & \text{if $a$ is even}, \\
(\calq_m^{\sgn})^{U_n} & \text{if $a$ is odd}.
\end{cases}
\end{equation*}
It is clear that (ii), (iii) and (iv) follow from this. 
\end{proof}

\section{The case $k=3$}

In this section, we will implement the procedure outlined in \S \ref{sec:uninv}
for the case $k=3$. Specifically, we shall determine a basis for the algebra
$(\calq^{\sym k}_m)^{U_n}$ and a basis for the space $(\calq^{\sgn}_m)^{U_n}$.
Using these bases, we obtain all the $\Gln$-highest weight vectors which occur in 
$S^3(S^m(\C^n))$ and in $\Lambda^3(S^m(\C^n))$.

\subsection{Algebra generators for $\calq^{U_n}$}

In this subsection, we shall describe a finite set of generators for the algebra $\calq^{U_n}$. 

\begin{prop}
The algebra $\calq^{U_n}$ is generated by
\[
\alpha_1=x_{11}x_{12}x_{13},\quad
\beta_2=x_{12}\delta_{13},\quad
\beta_3=x_{13}\delta_{12},\quad
\gamma_1=
\begin{vmatrix}
x_{11}&x_{12}&x_{13}\\
x_{21}&x_{22}&x_{23}\\
x_{31}&x_{32}&x_{33}
\end{vmatrix},\quad
\gamma_2=\delta_{12}\delta_{13}\delta_{23}.
\] 
\end{prop}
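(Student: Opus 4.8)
The plan is to use Corollary~\ref{cor:bmkd}, which tells us that $\calq^{U_n}$ has a basis consisting of the standard monomials $\delta_T$ for $T\in\SST(D,(m^3))$ as $m$ ranges over $\Z_{\ge0}$ and $D$ over Young diagrams with $\ell(D)\le 3$. Since $\delta_{T_1T_2\cdots T_p}=\delta_{T_1}\cdots\delta_{T_p}$ depends only on the multiset of columns of $T$, and each column of $T$ is a strictly increasing sequence drawn from $\{1,2,3\}$, the only possible columns are the single boxes $1$, the $2$-box columns ${1\atop 2}$, ${1\atop 3}$, ${2\atop 3}$, and the $3$-box column ${1\atop 2},{2\atop 3}$ (stacked), with associated minors $x_{1j}$, $\delta_{12},\delta_{13},\delta_{23}$, and $\gamma_1$ respectively. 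Because the content is $(m^3)$, the total number of $1$'s, $2$'s, and $3$'s appearing among all columns are each equal to $m$; counting how many times each label is used by each column type gives linear constraints on the column multiplicities. First I would set up these constraints and solve them, so as to describe exactly which products of the minors $x_{11},x_{12},x_{13},\delta_{12},\delta_{13},\delta_{23},\gamma_1$ can arise as $\delta_T$ for a semistandard $T$ of content $(m^3)$; note that semistandardness forces, in a one-row portion, that a box in row $i$ gets label $\ge i$, so the entries available in rows $1,2,3$ are constrained, which is what cuts the column list down to the seven minors listed (e.g. a $1$-box column must carry label $1$, giving $x_{11}$, not $x_{12}$ or $x_{13}$ in isolation).

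The key computation is then to express each admissible $\delta_T$ as a monomial in $\alpha_1,\beta_2,\beta_3,\gamma_1,\gamma_2$. The point is that the ``loose'' minors $x_{12}$, $x_{13}$, $\delta_{12}$, $\delta_{13}$ never occur alone in a legal content-$(m^3)$ tableau: $x_{12}$ and $x_{13}$ come from a $2$-box column only via the semistandard shapes whose first row forces the partner entry, and balancing the content forces them to pair up as $x_{12}\delta_{13}=\beta_2$, $x_{13}\delta_{12}=\beta_3$, $x_{11}x_{12}x_{13}=\alpha_1$, $\delta_{12}\delta_{13}\delta_{23}=\gamma_2$, together with free powers of $\gamma_1$. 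Concretely, I would show that the column-multiplicity vector of any $T\in\SST(D,(m^3))$ can be written as a nonnegative integer combination of the five ``syllable'' vectors corresponding to $\alpha_1,\beta_2,\beta_3,\gamma_1,\gamma_2$; this is a small explicit lattice-point problem in the cone cut out by the content constraints and semistandardness, and solving it yields $\delta_T=\alpha_1^{a}\beta_2^{b}\beta_3^{c}\gamma_1^{d}\gamma_2^{e}$ for suitable $a,b,c,d,e\in\Z_{\ge0}$. Since the $\delta_T$ span $\calq^{U_n}$ by Corollary~\ref{cor:bmkd}, and each lies in the subalgebra generated by $\alpha_1,\beta_2,\beta_3,\gamma_1,\gamma_2$, this proves that subalgebra is all of $\calq^{U_n}$. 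Conversely each of $\alpha_1,\beta_2,\beta_3,\gamma_1,\gamma_2$ is visibly a product of $U_n$-invariant minors (they are built from $2\times2$ and $3\times3$ minors of the first two or three rows, which are $U_n$-highest weight vectors), hence lies in $\calq^{U_n}$; and each lies in some $\calq_m$ (they are homogeneous of tri-degree $(1,1,1)$, $(1,1,1)$, $(1,1,1)$, $(1,1,1)$, $(2,2,2)$ respectively in the columns), so products of them stay in $\calq=\bigoplus_m\calp_{(m^3)}$. That gives both inclusions.

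I expect the main obstacle to be the bookkeeping in the second step: verifying that \emph{every} semistandard tableau of shape $D$ (with $\ell(D)\le3$) and content $(m^3)$ really does decompose into exactly these five syllables with \emph{nonnegative} exponents — in particular checking that no ``exotic'' column combination survives the content balancing, and handling the shapes $D$ with $2$ or $3$ rows (where $\gamma_1$-columns and $\delta_{23}$-columns enter) carefully. I would organize this by the number of rows of $D$: one row forces $\delta_T=\alpha_1^m$; two rows introduce some number of $\delta_{12},\delta_{13},\delta_{23}$ columns on top of $1$-columns, and the content equations pin down which combinations are legal; three rows add $\gamma_1$-columns. A clean way to package the computation is to note $\LM(\delta_T)=m_T$ by Proposition~\ref{qmdbasis}(ii), translate the problem to leading monomials, and observe that the leading monomials of $\alpha_1,\beta_2,\beta_3,\gamma_1,\gamma_2$ are algebraically independent monomials, so that matching leading monomials suffices and reduces everything to integer linear algebra. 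The only genuinely delicate check is that the resulting exponents are nonnegative integers for every admissible $T$, which follows from the explicit solution of the $3$-variable content system once one records that the number of ${1\atop 3}$-columns equals the number of $3$'s in row $1$, and similarly for the other column types.
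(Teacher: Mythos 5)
Your proposal follows essentially the same route as the paper: reduce via Corollary~\ref{cor:bmkd} to the standard monomials $\delta_T$ with $T\in\SST(D,(m^3))$, then, organizing by the number of rows of $D$, regroup the column minors of $\delta_T$ into the ``syllables'' $\alpha_1,\beta_2,\beta_3,\gamma_1,\gamma_2$ with nonnegative exponents --- this is exactly the paper's explicit one-, two-, and three-row computation, merely phrased as a lattice-point decomposition of the column-multiplicity vector. Two minor caveats, neither load-bearing: your parenthetical claim that a one-box column must carry label $1$ is false (single boxes labelled $2$ or $3$ do occur, and your own seven-minor list and later pairing argument rely on them), and ``matching leading monomials suffices'' is not by itself a proof of equality of polynomials --- but the exact regrouping of column minors already yields the identity $\delta_T=\alpha_1^{a}\beta_2^{b}\beta_3^{c}\gamma_1^{d}\gamma_2^{e}$, so the argument stands.
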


\begin{proof}
Fix a positive integer $m$ and a Young diagram $D$ with  $\ell(D)\le 3$ and $|D|=3m$.
We shall show that for each $T\in \SST(D,(m^3))$, 
$\delta_T$ can be expressed of the form
\begin{equation}\label{eq:ftform}
\delta_T=\alpha_1^a\beta_2^{b_2}\beta_3^{b_3}\gamma_1^{c_1}\gamma_2^{c_2}
\end{equation}
where $a,b_1,b_2,c_1,c_2\in\Z_{\ge 0}$.
We consider several cases according to the number of rows in $D$.

\medskip
Case I: $\len D=1$.
We must have $D=(3m)$, and $\SST((3m),(m^3))$ consists of exactly one tableau $T$ which is given by 
\begin{equation*}
T=\lower.5em\hbox{\includegraphics{./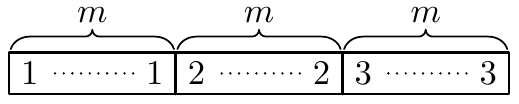}}.
\end{equation*}
Then $\delta_T=\alpha^m_1$.
Note that $\alpha_1$ is an invariant for $\sym3$.

\medskip
Case II: $\len D=2$.
Let us take a tableau $T\in\SST(D,(m^3))$,
and suppose that there are $a$ $2$'s and $b$ $3$'s
in the second row of $T$.
If $a+b\le m$, then $T$ is of the form
\begin{equation*}
T=\vcentralize{\includegraphics{./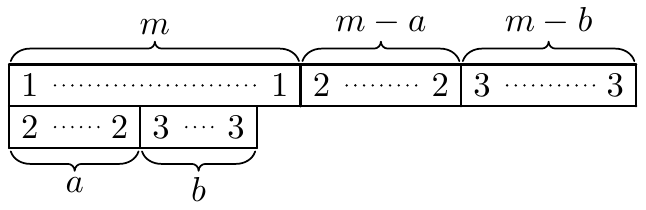}},
\end{equation*}
and we have
\begin{align*}
\delta_T
&=\delta_{12}^a \delta_{13}^b x_{11}^{m-a-b} x_{12}^{m-a} x_{13}^{m-b} \\
&=(x_{13}\delta_{12})^a (x_{12}\delta_{13})^b (x_{11}x_{12}x_{13})^{m-a-b} \\
&=\beta_2^a \beta_3^b \alpha_1^{m-a-b}.
\end{align*}
If $a+b>m$, then $T$ is of the form
\begin{equation*}
T=\vcentralize{\includegraphics{./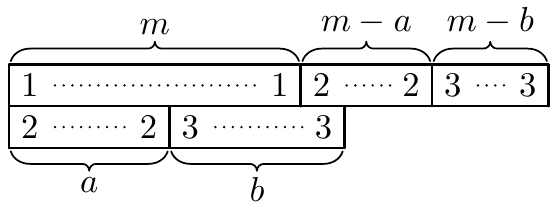}},
\end{equation*}
and we have
\begin{align*}
\delta_T
&=\delta_{12}^a \delta_{13}^{m-a} \delta_{23}^{a+b-m} x_{12}^{(2m-a)-(a+b)} x_{13}^{m-b} \\
&=(\delta_{12}\delta_{13}\delta_{23})^{a+b-m}(x_{13}\delta_{12})^{m-b}(x_{12}\delta_{13})^{2m-2a-b} \\
&=\gamma_2^{a+b-m}\beta_2^{m-b}\beta_3^{2m-2a-b}.
\end{align*}
We see in both cases, $\delta_T$ is expressed as a monomial in $\alpha_1,\beta_2,\beta_3$ and $\gamma_2$.

\medskip
Case III: $\len D=3$.
Let us take a tableau $T\in\SST(D,(m^3))$.
Assume that there are $e$ boxes in the bottom row of $D$,
and let $\widehat{D}$ be a Young diagram obtained from $D$ by deleting first $e$ columns.
Then $\len{\widehat{D}}\le2$ and 
$T$ is of the form
\begin{equation*}
T=\lower1.5em\hbox{\includegraphics{./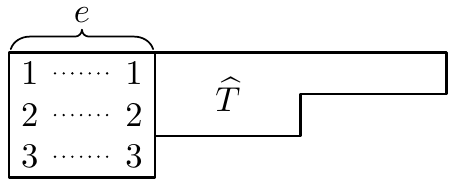}},
\end{equation*}
where $\widehat{T}\in\SST(\widehat{D},(m-e)^3)$.
Thus we have
\begin{equation*}
\delta_T=\gamma_1^e\delta_{\widehat{T}}.
\end{equation*}
By the preceding discussion,
$\delta_{\widehat{T}}$ is in form \eqref{eq:ftform}.
Hence, $\delta_T$ is also in this form.
\end{proof}

\begin{remark}
It follows from the discussion above that
\begin{equation*}
K_{D,(m^3)}=\min\{\lambda_1-\lambda_2,\lambda_2-\lambda_3\}+1
\end{equation*}
for each Young diagram $D=(\lambda_1,\lambda_2,\lambda_3)$ with $|D|=3m$.
\end{remark}

\subsection{Two subalgebras of $\calq^{U_n}$} 

Our goal is to determine a basis for $(\calq^{U_n})^{\sym3}$ and a basis for $(\calq^{U_n})^{\sgn}$.
In the previous subsection, we proved that the algebra $\calq^{U_n}$ is generated by $\alpha_1,\gamma_1,\gamma_2, \beta_2,\beta_3$. 
Let $\calr_1$ and $\calr_2$ be the subalgebras of $\calq^{U_n}$ defined by 
\[
\calr_1=\C[\alpha_1,\gamma_1,\gamma_2],\quad \calr_2=\C[\beta_2,\beta_3].
\]
We note that $\calr_1$ and $\calr_2$ are also $\sym3$-modules.
In fact, $\alpha_1$ is a $\sym3$-invariant,
and $\gamma_1$ and $\gamma_2$ each generates a one-dimensional representation of $\sym3$
which is isomorphic to the sign representation.
Therefore, it is easy to describe the $\sym3$-module structure of $\calr_1$ and we will do this later.
On the other hand, $\beta_2$ and $\beta_3$ span the irreducible $\sym3$-module $\symirrep{3}{(2,1)}$
labeled by the Young diagram $(2,1)$ (see \S \ref{sec:schurweyl}),
so that $\calr_2$ is the symmetric algebra on $\symirrep{3}{(2,1)}$.
To determine the structure of $(\calq^{U_n})^{\sym3}$ and $(\calq^{U_n})^{\sgn}$,
the main work is to determine the structure of $(\calr_2)^{\sym3}$ and $(\calr_2)^{\sgn}$.

\medskip
We now consider a slightly general situation.
Let $k\in\Z$ be such that $k\ge3$.
The polynomial algebra $\C[t_1,\dots,t_k]$ is a $\sym k$-module
by letting $\sigma\cdot t_i=t_{\sigma(i)}$ for $\sigma\in\sym k$ and $i=1,\dots,k$.
It is well known that the algebra $\C[t_1,\dots,t_k]^{\sym k}$ of $\sym k$-invariants,
or \emph{symmetric polyonomials} in $k$ variables,
is a polynomial algebra in the \emph{elementary symmetric polynomials}
\begin{equation*}
\sigma_s=\sum_{1\le j_1<\dots<j_s\le k}t_{j_1}\dots t_{j_s}\quad(s=1,\dots,k),
\end{equation*}
which are algebraically independent.
Namely, $\C[t_1,\dots,t_k]^{\sym k}=\C[\sigma_1,\dots,\sigma_k]$.
We also notice that the $\sym k$-submodule $\C[t_1,\dots,t_k]^{\sgn}$
consisting of \emph{alternating polynomials} is given by
\begin{equation*}
\C[t_1,\dots,t_k]^{\sgn}=\Delta\cdot\C[\sigma_1,\dots,\sigma_k],
\end{equation*}
where
\begin{equation*}
\Delta=\det(t_j^{i-1})_{1\le i,j\le k}=\prod_{1\le i<j\le k}(t_i-t_j)
\end{equation*}
is the simplest alternating polynomial.

\medskip
For each $i=2,3,\dots,k$, put
\begin{equation*}
\beta_i=
x_{12}\dots\om{x_{1i}}\dots x_{1k}
\begin{vmatrix}
x_{11} & x_{1i} \\
x_{21} & x_{2i}
\end{vmatrix},
\end{equation*}
where $\om{x_{1i}}$ indicates the omission of $x_{1i}$ in the product.

\begin{lemma}
For $\sigma\in\sym k$ and $i=2,\dots,k$,
\begin{equation*}
\sigma\cdot\beta_i=\begin{cases}
\beta_{\sigma(i)} & \sigma(1)=1, \\
-\beta_{\sigma(1)} & \sigma(i)=1, \\
\beta_{\sigma(i)}-\beta_{\sigma(1)} & \sigma(j)=1,\;j\ne 1,i.
\end{cases}
\end{equation*}
\end{lemma}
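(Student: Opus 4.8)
The plan is to prove the transformation formula for $\beta_i$ by a direct computation, tracking how the permutation action on the variables $x_{1j}$ propagates through the defining expression. Recall that $\sigma \in \sym k$ acts by $\sigma \cdot x_{1j} = x_{1\sigma(j)}$, and that
\[
\beta_i = \Bigl(\prod_{\substack{2 \le j \le k \\ j \ne i}} x_{1j}\Bigr)\bigl(x_{11}x_{2i} - x_{1i}x_{21}\bigr),
\]
so the action of $\sigma$ sends $\beta_i$ to $\Bigl(\prod_{\substack{2 \le j \le k \\ j \ne i}} x_{1\sigma(j)}\Bigr)\bigl(x_{1\sigma(1)}x_{2i} - x_{1\sigma(i)}x_{21}\bigr)$; note here that $x_{2i}, x_{21}$ are fixed since $\sigma$ only permutes the column index of the first row in this setup --- more carefully, the $2\times2$ determinant $\delta_{1i}^{(1,i)} = x_{11}x_{2i}-x_{1i}x_{21}$ involves the second row, but since we identify $\sym k$ with permutation matrices acting on all columns, $\sigma$ actually sends $x_{pi} \mapsto x_{p\sigma(i)}$ for every row $p$; I would use the latter (correct) interpretation, so $\sigma\cdot\delta_{1i} = \delta_{\sigma(1)\sigma(i)}$ where $\delta_{ab}$ is the $2\times2$ minor on columns $a,b$ (with a sign if $\sigma(1)>\sigma(i)$).

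First I would establish the key reduction: the product $\prod_{j \ne 1, i} x_{1j}$ (over $j \in [k]$, excluding indices $1$ and $i$) is sent by $\sigma$ to $\prod_{j \ne 1,i} x_{1\sigma(j)} = \prod_{\ell \ne \sigma(1), \sigma(i)} x_{1\ell}$, since $\sigma$ is a bijection. So after applying $\sigma$, the expression $\sigma\cdot\beta_i$ equals $\bigl(\prod_{\ell \ne \sigma(1),\sigma(i)} x_{1\ell}\bigr)\cdot\delta_{\sigma(1)\sigma(i)}$ where I write $\delta_{ab} = x_{1a}x_{2b} - x_{1b}x_{2a}$ (antisymmetric in $a,b$). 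Now I split into the three cases according to where $1$ sits among $\{\sigma(1), \sigma(i)\}$ and the remaining indices.

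In Case 1, $\sigma(1) = 1$: then $\sigma(i) \ne 1$, the product is over $\ell \ne 1, \sigma(i)$, and $\delta_{1\sigma(i)} = x_{11}x_{2\sigma(i)} - x_{1\sigma(i)}x_{21}$, so $\sigma\cdot\beta_i = \bigl(\prod_{\ell \ne 1,\sigma(i)} x_{1\ell}\bigr)\delta_{1\sigma(i)} = \beta_{\sigma(i)}$ directly from the definition. In Case 2, $\sigma(i) = 1$: then $\sigma(1) \ne 1$, and $\delta_{\sigma(1)1} = -\delta_{1\sigma(1)}$, so $\sigma\cdot\beta_i = -\bigl(\prod_{\ell \ne 1,\sigma(1)} x_{1\ell}\bigr)\delta_{1\sigma(1)} = -\beta_{\sigma(1)}$. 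In Case 3, $\sigma(j) = 1$ for some $j \ne 1, i$: then $1 \notin \{\sigma(1),\sigma(i)\}$, so the product $\prod_{\ell \ne \sigma(1),\sigma(i)} x_{1\ell}$ contains the factor $x_{11}$, and I would expand $\delta_{\sigma(1)\sigma(i)} = x_{1\sigma(1)}x_{2\sigma(i)} - x_{1\sigma(i)}x_{2\sigma(1)}$, multiply back in the $x_{11}$ factor and the rest of the product, and recognize the result as $\beta_{\sigma(i)} - \beta_{\sigma(1)}$ by using the identity $x_{11}\delta_{ab} = x_{1a}\cdot(x_{11}x_{2b}-x_{1b}x_{21})/\text{(appropriate factor)}$ --- more cleanly, I would verify the polynomial identity $x_{11}\,\delta_{ab} = x_{1b}\,\delta_{1a} - x_{1a}\,\delta_{1b}$... wait, that has the wrong sign; the correct Plücker-type relation is $x_{11}\delta_{ab} - x_{1a}\delta_{1b} + x_{1b}\delta_{1a} = 0$, i.e. $x_{11}\delta_{ab} = x_{1a}\delta_{1b} - x_{1b}\delta_{1a}$. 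Multiplying this by $\prod_{\ell \ne 1, \sigma(1), \sigma(i)} x_{1\ell}$ with $a = \sigma(1)$, $b = \sigma(i)$ gives exactly $\sigma\cdot\beta_i = \beta_{\sigma(i)} - \beta_{\sigma(1)}$.

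The main obstacle is Case 3: the three-term Plücker relation $x_{11}\delta_{ab} = x_{1a}\delta_{1b} - x_{1b}\delta_{1a}$ is the crux, and I would want to double-check its sign convention carefully against the definition $\delta_{ab} = x_{1a}x_{2b} - x_{1b}x_{2a}$ before invoking it. Everything else is bookkeeping about bijections and the antisymmetry $\delta_{ab} = -\delta_{ba}$. Since $\sym k$ is generated by transpositions, one could alternatively reduce to checking the formula for $\sigma$ a transposition and then arguing by composition, but the direct case analysis above is cleaner and avoids a separate inductive argument; I would present the direct version.
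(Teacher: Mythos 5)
Your proof is correct and takes essentially the same route as the paper: a direct three-case computation according to whether $\sigma(1)=1$, $\sigma(i)=1$, or $\sigma(j)=1$ for some $j\ne 1,i$, using the correct column action $\sigma\cdot x_{pj}=x_{p\sigma(j)}$ on all rows. The only cosmetic difference is that in the third case you package the algebra as the Pl\"ucker-type identity $x_{11}\delta_{ab}=x_{1a}\delta_{1b}-x_{1b}\delta_{1a}$ (whose sign you verify correctly), which is exactly the cancellation the paper carries out by expanding the $2\times2$ minor directly.
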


\begin{proof}
Let $\sigma\in\sym k$.
If $\sigma(1)=1$, then
\begin{equation*}
\sigma\cdot\beta_i
=x_{1\sigma(2)}\dots\om{x_{1\sigma(i)}}\dots x_{1\sigma(k)}
\begin{vmatrix}
x_{11} & x_{1\sigma(i)} \\
x_{21} & x_{2\sigma(i)}
\end{vmatrix}
=\beta_{\sigma(i)}.
\end{equation*}
Next, assume that $\sigma(1)\ne1$.
If $\sigma(i)=1$, then
\begin{equation*}
\sigma\cdot\beta_i
=x_{1\sigma(2)}\dots\om{x_{1\sigma(i)}}\dots x_{1\sigma(k)}
\begin{vmatrix}
x_{1\sigma(1)} & x_{11} \\
x_{2\sigma(1)} & x_{21}
\end{vmatrix}
=-\beta_{\sigma(1)}.
\end{equation*}
If $\sigma(j)=1$ for $j\ne i$, then
\begin{align*}
\sigma\cdot\beta_i
&=x_{1\sigma(2)}\dots\om{x_{1\sigma(i)}}\dots{x_{1\sigma(j)}}\dots x_{1\sigma(k)}
\begin{vmatrix}
x_{1\sigma(1)} & x_{1\sigma(i)} \\
x_{2\sigma(1)} & x_{2\sigma(i)}
\end{vmatrix}
\\
&=x_{1\sigma(1)}x_{1\sigma(2)}\dots\om{x_{1\sigma(i)}}\dots\om{x_{1\sigma(j)}}\dots x_{1\sigma(k)}
\cdot x_{11}x_{2\sigma(i)} \\
&\qquad{}-x_{1\sigma(2)}\dots{x_{1\sigma(i)}}\dots\om{x_{1\sigma(j)}}\dots x_{1\sigma(k)}
\cdot x_{11}x_{2\sigma(1)} \\
&=\beta_{\sigma(i)}-\beta_{\sigma(1)}.
\qedhere
\end{align*}
\end{proof}

Put
\begin{equation*}
T_1=\beta_2+\beta_3+\dots+\beta_k.
\end{equation*}
By the lemma above, for each $\sigma\in\sym k$, we have
\begin{equation*}
\sigma\cdot T_1=\begin{cases}
T_1 & \sigma(1)=1, \\
T_1-k\beta_{\sigma(1)} & \sigma(1)\ne1.
\end{cases}
\end{equation*}
Hence, if we put
\begin{equation*}
T_i=T_1-k\beta_i\qquad(i=2,3,\dots,k),
\end{equation*}
then we have
\begin{equation*}
\sigma\cdot T_i=T_{\sigma(i)}
\end{equation*}
for any $\sigma\in\sym k$ and $i=1,2,\dots,k$.
This implies that the $\C$-algebra homomorphism defined by
\begin{equation*}
\Phi\colon\C[t_1,\dots,t_k]\ni t_i\mapsto T_i\in\C[\beta_2,\dots,\beta_k]
\end{equation*}
is a surjective $\sym k$-map such that $\Phi(\sigma_1)=0$.
Consequently, we have
\begin{multline*}
\C[\beta_2,\dots,\beta_k]^{\sym k}
=\Phi(\C[t_1,\dots,t_k])^{\sym k}
=\Phi(\C[t_1,\dots,t_k]^{\sym k}) \\
=\Phi(\C[\sigma_1,\dots,\sigma_k])
=\C[\Phi(\sigma_2),\dots,\Phi(\sigma_k)]
\end{multline*}
and
\begin{equation*}
\C[\beta_2,\dots,\beta_k]^{\sgn}
=\Phi(\C[t_1,\dots,t_k]^{\sgn})
=\Phi(\Delta)\cdot\C[\Phi(\sigma_2),\dots,\Phi(\sigma_k)].
\end{equation*}

\medskip 
We now return to the case $k=3$.
The images of $\sigma_2, \sigma_3$ and $\Delta$ under $\Phi$ are given by
\begin{align*}
\Phi(\sigma_2)&=-3(\beta_2^2-\beta_2\beta_3+\beta_3^2)=-3\alpha_2,\\
\Phi(\sigma_3)&=-(2\beta^3_2-3\beta^2_2\beta_3-3\beta_2\beta^2_3+2\beta^3_3)=-\alpha_3,\\
\Phi(\Delta)&=27\alpha_1\gamma_1.
\end{align*}
The discriminant $\Delta^2$ of the cubic polynomial
\begin{equation*}
f(z)=(z-t_1)(z-t_2)(z-t_3)=z^3-\sigma_1z^2+\sigma_2z-\sigma_3
\end{equation*}
is explicitly given by
\begin{equation*}
\Delta^2=-4\sigma_2^3-27\sigma_3^2+\sigma_1^2\sigma_2^2+18\sigma_1\sigma_2\sigma_3-4\sigma_1^3\sigma_3.
\end{equation*}
Thus we have the relation
\begin{equation*}
\Phi(\Delta)^2=-4\Phi(\sigma_2)^3-27\Phi(\sigma_3)^2=27(4\alpha_2^3-\alpha_3^2).
\end{equation*}
Summarizing the discussion above, we have the
\begin{lemma}\label{q2s3inv}
The subalgebra $(\calr_2)^{\sym3}$ and the subspace $(\calr_2)^{\sgn}$ are given by
\begin{equation*}
(\calr_2)^{\sym3}=\C[\beta_2,\beta_3]^{\sym3}=\C[\alpha_2,\alpha_3],\qquad
(\calr_2)^{\sgn}=\alpha_1\gamma_1\cdot\C[\alpha_2,\alpha_3].
\end{equation*}
Moreover, we have
\begin{equation}\label{eq:alpha23}
27\alpha_1^2\gamma_1^2=4\alpha_2^3-\alpha_3^2.
\end{equation}
\end{lemma}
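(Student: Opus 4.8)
The plan is to assemble the ingredients prepared above, all specialized to $k=3$. Since $\calr_2=\C[\beta_2,\beta_3]$ by definition, we have $(\calr_2)^{\sym3}=\C[\beta_2,\beta_3]^{\sym3}$ and $(\calr_2)^{\sgn}=\C[\beta_2,\beta_3]^{\sgn}$, so it suffices to rewrite the general identities $\C[\beta_2,\beta_3]^{\sym3}=\C[\Phi(\sigma_2),\Phi(\sigma_3)]$ and $\C[\beta_2,\beta_3]^{\sgn}=\Phi(\Delta)\cdot\C[\Phi(\sigma_2),\Phi(\sigma_3)]$ using the explicit values $\Phi(\sigma_2)=-3\alpha_2$, $\Phi(\sigma_3)=-\alpha_3$, and $\Phi(\Delta)=27\alpha_1\gamma_1$ computed above. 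Because we work over $\C$, replacing a generator by a nonzero scalar multiple does not change the subalgebra it generates, so $\C[\Phi(\sigma_2),\Phi(\sigma_3)]=\C[-3\alpha_2,-\alpha_3]=\C[\alpha_2,\alpha_3]$ and $\Phi(\Delta)\cdot\C[\alpha_2,\alpha_3]=27\alpha_1\gamma_1\cdot\C[\alpha_2,\alpha_3]=\alpha_1\gamma_1\cdot\C[\alpha_2,\alpha_3]$; this yields the two displayed descriptions in the lemma. (Note that this already records the a priori non-obvious fact that $\alpha_1\gamma_1$ lies in $\calr_2=\C[\beta_2,\beta_3]$, since $\Phi(\Delta)$ does.)

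For the relation \eqref{eq:alpha23}, I would substitute the same values into the discriminant identity $\Phi(\Delta)^2=-4\Phi(\sigma_2)^3-27\Phi(\sigma_3)^2$ recorded above. The left-hand side becomes $(27\alpha_1\gamma_1)^2=729\,\alpha_1^2\gamma_1^2$, while the right-hand side becomes $-4(-3\alpha_2)^3-27(-\alpha_3)^2=108\,\alpha_2^3-27\,\alpha_3^2=27\bigl(4\alpha_2^3-\alpha_3^2\bigr)$; dividing through by $27$ gives $27\,\alpha_1^2\gamma_1^2=4\alpha_2^3-\alpha_3^2$, as claimed.

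I do not anticipate a genuine obstacle, since the substantive work has already been carried out: the $\sym k$-action on the $\beta_i$, the construction of the surjective $\sym k$-equivariant map $\Phi$ annihilating $\sigma_1$, the transfer to invariants by averaging over $\sym k$ (valid in characteristic zero), and the three explicit evaluations $\Phi(\sigma_2),\Phi(\sigma_3),\Phi(\Delta)$. The only points that call for a moment's care are the harmless observation about rescaling generators and keeping track of signs in the discriminant substitution so that the constant $27$ emerges correctly. If desired, one could append the remark that $\alpha_2,\alpha_3$ are algebraically independent — so that $(\calr_2)^{\sym3}$ is in fact a polynomial ring in two variables — which follows because $\sym3$ acts faithfully on $\C[\beta_2,\beta_3]$ through the two-dimensional irreducible representation $\symirrep{3}{(2,1)}$, whence $\C[\beta_2,\beta_3]$ is integral over its ring of invariants and the latter has transcendence degree $2$; but this is not needed for the statement as written.
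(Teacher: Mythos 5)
Your proposal is correct and follows essentially the same route as the paper: the lemma is there stated as a summary of exactly this computation, namely applying the general identities $\C[\beta_2,\dots,\beta_k]^{\sym k}=\C[\Phi(\sigma_2),\dots,\Phi(\sigma_k)]$ and $\C[\beta_2,\dots,\beta_k]^{\sgn}=\Phi(\Delta)\cdot\C[\Phi(\sigma_2),\dots,\Phi(\sigma_k)]$ with the evaluations $\Phi(\sigma_2)=-3\alpha_2$, $\Phi(\sigma_3)=-\alpha_3$, $\Phi(\Delta)=27\alpha_1\gamma_1$, and deducing \eqref{eq:alpha23} from the discriminant identity with $\sigma_1\mapsto 0$. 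Your scalar bookkeeping (dividing $729\,\alpha_1^2\gamma_1^2=27(4\alpha_2^3-\alpha_3^2)$ by $27$) and the rescaling-of-generators remark match the paper's argument.
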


\subsection{$\sym3$ invariants in $\calq^{U_n}$}

We now consider the tensor product 
$\calr_1\otimes \calr_2$ of the algebras $\calr_1$ and $\calr_2$, and 
let $m\colon\calr_1\otimes\calr_2\to\calq^{U_n}$ be the linear map such that for $(f_1,f_2)\in\calr_1\times\calr_2$,
\[
m(f\otimes g)=fg
\]
where $fg$ is the product of $f$ and $g$ in $\calq^{U_n}$.
The map $m$ is clearly surjective.
Moreover, since $\sym3$ acts on $\calq^{U_n}$ by algebra automorphisms, $m$ is a $\sym3$-module map.
Hence we obtain the following lemma:

\begin{lemma}
Let $(\calr_1\otimes\calr_2)^{\sym3}$ be the space of $\sym3$-invariants in $\calr_1\otimes\calr_2$.
Then
\[
m\bigl((\calr_1\otimes\calr_2)^{\sym3}\bigr)=\bigl(\calq^{U_n}\bigr)^{\sym3}.
\eqed
\]
\end{lemma}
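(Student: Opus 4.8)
The plan is to use the standard averaging trick for actions of the finite group $\sym3$ over a field of characteristic zero. The only nontrivial inclusion is $m\bigl((\calr_1\otimes\calr_2)^{\sym3}\bigr)\supseteq(\calq^{U_n})^{\sym3}$; the reverse inclusion is immediate, since $m$ is a $\sym3$-module map and therefore carries $\sym3$-invariants to $\sym3$-invariants.

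First I would introduce the Reynolds operator
\[
P=\frac{1}{|\sym3|}\sum_{\sigma\in\sym3}\sigma,
\]
regarded as acting on any $\sym3$-module. It is a linear projection onto the subspace of $\sym3$-invariants, and this holds with no finiteness hypothesis on the module, so it applies equally to the (infinite-dimensional) algebra $\calr_1\otimes\calr_2$ and to $\calq^{U_n}$. Next I would record the key intertwining property: because $m$ is a $\sym3$-module map, one has $m\circ P=P\circ m$.

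Now the argument: take $g\in(\calq^{U_n})^{\sym3}$. By surjectivity of $m$ (already noted in the excerpt) there is $h\in\calr_1\otimes\calr_2$ with $m(h)=g$. Then $P(h)\in(\calr_1\otimes\calr_2)^{\sym3}$, and
\[
m\bigl(P(h)\bigr)=P\bigl(m(h)\bigr)=P(g)=g,
\]
where the last equality uses that $g$ is already $\sym3$-invariant. Hence $g\in m\bigl((\calr_1\otimes\calr_2)^{\sym3}\bigr)$, which gives the desired inclusion and completes the proof.

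There is essentially no obstacle here: the reasoning is purely formal, depending only on surjectivity of $m$, the compatibility of $m$ with the $\sym3$-actions, and the complete reducibility of $\sym3$-modules over $\C$ in the form of the Reynolds operator. The same argument, with $P$ replaced by the projector $\frac{1}{|\sym3|}\sum_{\sigma\in\sym3}(\sgn\sigma)\,\sigma$ onto the $\sgn$-isotypic component, yields the companion identity $m\bigl((\calr_1\otimes\calr_2)^{\sgn}\bigr)=(\calq^{U_n})^{\sgn}$ needed later; the real work in the paper then lies not in this lemma but in computing $(\calr_1\otimes\calr_2)^{\sym3}$ and $(\calr_1\otimes\calr_2)^{\sgn}$ explicitly from Lemma~\ref{q2s3inv} and the $\sym3$-structure of $\calr_1$.
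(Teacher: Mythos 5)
Your argument is correct and is exactly the justification the paper has in mind: the lemma carries its QED in the statement because it is regarded as an immediate consequence of the surjectivity and $\sym3$-equivariance of $m$, with the averaging over $\sym3$ (your Reynolds operator $P$ and the identity $m\circ P=P\circ m$) left implicit. You have simply made that implicit semisimplicity step explicit, so nothing further is needed.
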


\begin{lemma}\label{calq1}
Let
\[
\bbf_{\calr_1}=\set[big]{\alpha^a_1\gamma^{b_1}_1\gamma^{b_2}_2}{a,b_1,b_2\in\Z_{\ge 0}}.
\]
\begin{enumerate}
\item[(i)] 
The set $\bbf_{\calr_1}$ is a basis for $\calr_1$.
\item[(ii)] The algebra $\calr_1$ has a decomposition
\[
\calr_1=\bigoplus_{a,b_1,b_2\in\Z_{\ge 0}} (\calr_1)_{(a,b_1,b_2)},
\]
where $(\calr_1)_{(a,b_1,b_2)}$ is the subspace of $\calr_1$ spanned by 
$\alpha^a_1\gamma^{b_1}_1\gamma^{b_2}_2$.
Moreover, the subspace
$(\calr_1)_{(a,b_1,b_2)}$ is an irreducible $\sym3$-submodule and
\[
(\calr_1)_{(a,b_1,b_2)}\cong
\begin{cases}
\triv & \text{if $b_1+b_2$ is even}, \\
\sgn & \text{if $b_1+b_2$ is odd.}
\end{cases}
\]
\end{enumerate}
\end{lemma}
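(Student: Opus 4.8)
The plan is to establish both parts of Lemma~\ref{calq1} by direct computation, since $\calr_1=\C[\alpha_1,\gamma_1,\gamma_2]$ is a very small and concrete algebra. For part (i), I would first observe that $\bbf_{\calr_1}$ clearly spans $\calr_1$, because any polynomial in the three generators $\alpha_1,\gamma_1,\gamma_2$ is a linear combination of monomials $\alpha_1^a\gamma_1^{b_1}\gamma_2^{b_2}$. The real content is linear independence, i.e.\ that there are no algebraic relations among $\alpha_1,\gamma_1,\gamma_2$. For this I would exhibit the leading monomials under the graded lexicographic order of \S2: using $\LM(fg)=\LM(f)\LM(g)$, one computes $\LM(\alpha_1)=x_{11}x_{12}x_{13}$, $\LM(\gamma_1)=x_{11}x_{22}x_{33}$, and $\LM(\gamma_2)=\LM(\delta_{12}\delta_{13}\delta_{23})=(x_{11}x_{22})(x_{11}x_{33})(x_{12}x_{33})$ (the product of the leading monomials of the three $2\times2$ minors). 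Then $\LM(\alpha_1^a\gamma_1^{b_1}\gamma_2^{b_2})$ is the product of these, and one checks that distinct triples $(a,b_1,b_2)$ give distinct leading monomials — e.g.\ by reading off the exponents of $x_{13}$, $x_{22}$, $x_{33}$ (or whichever coordinates cleanly separate the three generators) one recovers $a,b_1,b_2$. Distinct leading monomials force linear independence, giving (i).

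For part (ii), the decomposition $\calr_1=\bigoplus (\calr_1)_{(a,b_1,b_2)}$ is immediate from (i) once we note that each line $\C\cdot\alpha_1^a\gamma_1^{b_1}\gamma_2^{b_2}$ is $\sym3$-stable: indeed $\alpha_1$ is $\sym3$-invariant, and each of $\gamma_1,\gamma_2$ spans a copy of the sign representation (this is asserted in the text preceding the lemma — $\gamma_1$ is the $3\times3$ determinant, which under a column permutation $\tau$ picks up $\sgn\tau$, and $\gamma_2=\delta_{12}\delta_{13}\delta_{23}$ is the product of the three $2\times2$ minors, on which a transposition of two columns swaps two of the minors and negates the third, yielding overall sign $-1$; alternatively $\gamma_2=\LM$-compatible with $\Phi(\Delta)/(27\alpha_1)$ from Lemma~\ref{q2s3inv}). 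Hence for $\tau\in\sym3$,
\[
\tau\cdot(\alpha_1^a\gamma_1^{b_1}\gamma_2^{b_2})=(\sgn\tau)^{b_1+b_2}\,\alpha_1^a\gamma_1^{b_1}\gamma_2^{b_2},
\]
which is exactly the statement that $(\calr_1)_{(a,b_1,b_2)}\cong\triv$ when $b_1+b_2$ is even and $\cong\sgn$ when $b_1+b_2$ is odd. Each such one-dimensional module is automatically irreducible.

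The only genuine obstacle is the leading-monomial bookkeeping in part (i): one must be slightly careful to pick coordinates in the leading monomials of $\alpha_1$, $\gamma_1$, $\gamma_2$ that allow the three exponents $a,b_1,b_2$ to be disentangled from the total exponent vector of $\LM(\alpha_1^a\gamma_1^{b_1}\gamma_2^{b_2})$. Concretely, $x_{13}$ appears only in $\LM(\alpha_1)$, $x_{23}$ appears only in $\LM(\gamma_1)$ (note $\gamma_2$'s leading monomial involves $x_{11},x_{22},x_{12},x_{33}$ but not $x_{23}$), and the remaining exponents then pin down $b_2$; this recovers $(a,b_1,b_2)$ uniquely. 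Everything else is bookkeeping with the already-established facts $\LM(fg)=\LM(f)\LM(g)$ and the $\sym3$-weights of the generators, so the proof is short.
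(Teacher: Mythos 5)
Your overall strategy is exactly the paper's: spanning is clear from the generators, linear independence follows from distinct leading monomials via $\LM(fg)=\LM(f)\LM(g)$, and part (ii) is immediate from (i) together with the fact that $\alpha_1$ is invariant while $\gamma_1,\gamma_2$ each carry the sign character. Your treatment of (ii) is correct and complete (the paper simply says it ``clearly follows from (i)'').

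However, your leading-monomial bookkeeping in (i) contains a concrete error. Since $\delta_{13}$ and $\delta_{23}$ are minors taken from rows $1$ and $2$ only, the variable $x_{33}$ cannot occur in them: one has $\LM(\delta_{12})=x_{11}x_{22}$, $\LM(\delta_{13})=x_{11}x_{23}$, $\LM(\delta_{23})=x_{12}x_{23}$, hence
\[
\LM(\gamma_2)=x_{11}^2x_{12}x_{22}x_{23}^2,
\]
not $(x_{11}x_{22})(x_{11}x_{33})(x_{12}x_{33})$ as you wrote. Consequently your ``concrete'' disentangling claim is backwards: $x_{23}$ does \emph{not} appear in $\LM(\gamma_1)=x_{11}x_{22}x_{33}$ (which contains $x_{33}$), and it is $\LM(\gamma_2)$ that contains $x_{23}^2$ and no $x_{33}$. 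The argument itself is easily repaired and then coincides with the paper's:
\[
\LM(\alpha_1^a\gamma_1^{b_1}\gamma_2^{b_2})
=x_{11}^{a+b_1+2b_2}x_{12}^{a+b_2}x_{13}^{a}x_{22}^{b_1+b_2}x_{23}^{2b_2}x_{33}^{b_1},
\]
so the exponents of $x_{13}$, $x_{33}$ and $x_{23}$ recover $a$, $b_1$ and $b_2$ (your first-mentioned triple $x_{13},x_{22},x_{33}$ also works, since $a=e_{13}$, $b_1=e_{33}$, $b_2=e_{22}-e_{33}$). With that correction the proof is the same as the paper's; as written, though, the verification that distinct triples give distinct leading monomials rests on false identifications of the relevant variables.
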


\begin{proof}
Since $\alpha_1,\gamma_1,\gamma_2$ generates the algebra $\calr_1$, $\bbf_{\calr_1}$ spans $\calr_1$. 
We compute
\begin{align*}
\LM(\alpha^a\gamma^{b_1}_1\gamma^{b_2}_2)
&=(x_{11}x_{12}x_{13})^a(x_{11}x_{22}x_{33})^{b_1}(x_{11}x_{22}x_{11}x_{23}x_{12}x_{23})^{b_2}\\
&=x_{11}^{a+b_1+2b_2}x_{12}^{a+b_2}x_{13}^ax_{22}^{b_1+b_2}
x_{33}^{b_1}x_{23}^{2b_2}
\end{align*}
and note that the exponents of $x_{13}$, $x_{33}$ and $x_{23}$ uniquely determine $a,b_1$ and $b_2$.
Hence, the elements of $\bbf_{\calr_1}$ have distinct leading monomials,
and it follows from this that $\bbf_{\calr_1}$ is linearly independent.
This gives (i). Part (ii) clearly follows from (i).
\end{proof}

Next, suppose that
\[
\calr_2=\bigoplus_i V_i
\]
is a decomposition of $\calr_2$ into irreducible $\sym3$-submodules. Then
\[
\calr_1\otimes \calr_2=\bigoplus_{a,b_1,b_2,i} (\calr_1)_{(a,b_1,b_2)}\otimes V_i
\]
and so we have
\begin{align*}
(\calr_1\otimes \calr_2)^{\sym3}
&=\bigoplus_{a,b_1,b_2,i}\bigl( (\calr_1)_{(a,b_1,b_2)}\otimes V_i\bigr)^{\sym3}, \\
(\calr_1\otimes \calr_2)^{\sgn}
&=\bigoplus_{a,b_1,b_2,i}\bigl( (\calr_1)_{(a,b_1,b_2)}\otimes V_i\bigr)^{\sgn}.
\end{align*}
Since
\begin{gather*}
\triv\otimes V_i\cong V_i, \\
\sgn\otimes \sgn\cong \triv, \quad
\sgn\otimes \triv\cong \sgn, \quad
\sgn\otimes\symirrep{3}{(2,1)}\cong\symirrep{3}{(2,1)},
\end{gather*}
we see that 
\begin{equation}\label{eq:rs3inv}
\bigl( (\calr_1)_{(a,b_1,b_2)}\otimes V_i\bigr)^{\sym3}\ne 0
\iff
\begin{cases}
\text{Case 1:} & (\calr_1)_{(a,b_1,b_2)}\cong  V_i\cong\triv,\ \text{or} \\[.25em]
\text{Case 2:} & (\calr_1)_{(a,b_1,b_2)}\cong  V_i\cong\sgn
\end{cases}
\end{equation}
and 
\begin{equation}\label{eq:rs3sgn}
\bigl( (\calr_1)_{(a,b_1,b_2)}\otimes V_i\bigr)^{\sgn}\ne 0
\iff
\begin{cases}
\text{Case 3:} & (\calr_1)_{(a,b_1,b_2)}\cong\sgn \text{\ and\ }  V_i\cong\triv,\ \text{or}\\[.25em]
\text{Case 4:} & (\calr_1)_{(a,b_1,b_2)}\cong\triv\text{\ and }\cong  V_i\cong\sgn.
\end{cases}
\end{equation}
It follows that  we may replace the algebra $\calr_2$ in the tensor product $\calr_1\otimes\calr_2$
by the subspace  $\calr_2^{\sym3}\oplus\calr_2^{\sgn}$, that is, 
\[
(\calr_1\otimes \calr_2)^{\sym3}=(\calr_1\otimes (\calr_2^{\sym3}\oplus\calr_2^{\sgn}))^{\sym3},
\]
and 
\[
(\calr_1\otimes \calr_2)^{\sgn}=(\calr_1\otimes (\calr_2^{\sym3}\oplus\calr_2^{\sgn}))^{\sgn}.
\]
Now $V_i\cong\triv$ if and only if $V_i=\Span(f)$ for some $f\in (\calr_2)^{\sym3}$,
and by Lemma \ref{q2s3inv}, $f$ is a polynomial in $\alpha_2$ and $\alpha_3$.
In fact, we may assume that $f=\alpha^c_2\alpha^d_3$ for some $c,d\in\Z_{\ge 0}$. 
Similarly, $V_i\cong\sgn$ if and only if $V_i=\Span(f)$
where $f=\alpha_1\gamma_2 h$ for some $h\in (\calr_2)^{\sym3}$,
and so we may assume that $f=\alpha_1\alpha^c_2\alpha^d_3\gamma_2$.

\medskip
We can now describe the vector in the image under the map $m$
of $( (\calr_1)_{(a,b_1,b_2)}\otimes V_i)^{\sym3}$ in Case 1 and Case 2,
and of $( (\calr_1)_{(a,b_1,b_2)}\otimes V_i)^{\sgn}$ in Case 3 and Case 4.

\medskip
\begin{enumerate}[{Case} 1:]
\item
Since $b_1+b_2$ is even, $b_1=2j_1+\ve$ and $b_2=2j_2+\ve$ for some $j_1,j_2\in\Z_{\ge 0}$ and $\ve\in\{0,1\}$.
In this case,
\[
m(\alpha^a_1\gamma^{b_1}_1\gamma^{b_2}_2\otimes \alpha^c_2\alpha^d_3)=
\alpha^a_1 \alpha^c_2\alpha^d_3 (\gamma^2_1)^{j_1}_1(\gamma^2_2)^{j_2}(\gamma_1\gamma_2)^\ve.
\]

\item
Since $b_1+b_2$ is odd, we either have  $b_1=2j_1+1$ and $b_2=2j_2$ or 
$b_1=2j_1$ and $b_2=2j_2+1$ for some $j_1,j_2\in\Z_{\ge 0}$.
So we either have  
\[
m(\alpha^a_1\gamma^{b_1}_1\gamma^{b_2}_2\otimes \alpha_1\alpha^c_2\alpha^d_3\gamma_2)=
\alpha^{a+1}_1 \alpha^c_2\alpha^d_3 (\gamma^2_1)^{j_1}(\gamma^2_2)^{j_2}(\gamma_1\gamma_2)
\]
or 
\[
m(\alpha^a_1\gamma^{b_1}_1\gamma^{b_2}_2\otimes \alpha_1\alpha^c_2\alpha^d_3\gamma_2)=
\alpha^{a+1}_1 \alpha^c_2\alpha^d_3 (\gamma^2_1)^{j_1}(\gamma^2_2)^{j_2+1}.
\]

\item
Since $b_1+b_2$ is odd, we either have  $b_1=2j_1+1$ and $b_2=2j_2$ or 
$b_1=2j_1$ and $b_2=2j_2+1$ for some $j_1,j_2\in\Z_{\ge 0}$. So we either have
\[
m(\alpha^a_1\gamma^{b_1}_1\gamma^{b_2}_2\otimes  \alpha^c_2\alpha^d_3 )=
\alpha^{a}_1 \alpha^c_2\alpha^d_3 (\gamma^2_1)^{j_1}(\gamma^2_2)^{j_2}\gamma_1
\]
or 
\[
m(\alpha^a_1\gamma^{b_1}_1\gamma^{b_2}_2\otimes \alpha^c_2\alpha^d_3 )=
\alpha^{a}_1 \alpha^c_2\alpha^d_3 (\gamma^2_1)^{j_1}(\gamma^2_2)^{j_2}\gamma_2.
\]

\item
Since $b_1+b_2$ is even, $b_1=2j_1+\ve$ and $b_2=2j_2+\ve$ for some $j_1,j_2\in\Z_{\ge 0}$ and $\ve\in\{0,1\}$.
In this case, 
\[
m(\alpha^a_1\gamma^{b_1}_1\gamma^{b_2}_2\otimes \alpha_1\alpha^c_2\alpha^d_3\gamma_2)=
\alpha^{a+1}_1 \alpha^c_2\alpha^d_3 (\gamma^2_1)^{j_1}_1(\gamma^2_2)^{j_2}(\gamma_1\gamma_2)^\ve\gamma_2.
\]
\end{enumerate}
Thus we have proved:

\begin{prop}\label{prop:alggen}
\begin{enumerate}[(i)]
\item The algebra  $(\calq^{U_n})^{\sym3}$ is spanned by the set 
\[
\Sinv:=\set[big]{\alpha^a_1\alpha^b_2\alpha^c_3(\gamma^2_1)^d(\gamma^2_2)^e(\gamma_1\gamma_2)^f}
{a,b,c,d,e,f\in\Z_{\ge 0}}.
\]
Hence, 
$\{\alpha_1, \alpha_2, \alpha_3, \gamma_1^2, \gamma_2^2, \gamma_1\gamma_2\}$
is a set of algebra generators for $(\calq^{U_n})^{\sym3}$.
\item The space $(\calq^{U_n})^{\sgn}$ is spanned by the set 
\begin{align*}
\Ssgn&=\set{\gamma_1h}{h\in\Sinv}\cup \set{\gamma_2h}{h\in\Sinv}\\
&=\set[big]{\alpha^a_1\alpha^b_2\alpha^c_3\gamma^{2d+1}_1\gamma^{2e}_2}
{a,b,c,d,e\in\Z_{\ge 0}} \\
&\qquad{}\cup
\set[big]{\alpha^a_1\alpha^b_2\alpha^c_3\gamma_1^{2d}\gamma_2^{2e+1}}{a,b,c,d,e\in\Z_{\ge 0}}.
\eqed
\end{align*}
\end{enumerate}
\end{prop}

We are now ready to prove the Main Theorem as stated in the Introduction.

\begin{proof}[Proof of Main Theorem]
We will only prove (i) as the proof for (ii) is similar.
By Proposition \ref{prop:alggen}, $(\calq^{U_n})^{\sym3}$ is spanned by all elements of the form 
\[
\alpha^a_1\alpha^b_2\alpha^c_3\gamma_1^{2d+f}\gamma_2^{2e+f}
=\alpha^a_1\alpha^b_2\alpha^c_3(\gamma^2_1)^d(\gamma^2_2)^e(\gamma_1\gamma_2)^f
\]
with $a,b,c,d,e,f\in\Z_{\ge 0}$.
We need to explain why it is enough to have $c,f\in\{0,1\}$.

\medskip
By equation \eqref{eq:alpha23},
$4\alpha^3_2-\alpha^2_3=27\alpha^2_1\gamma^2_2$. So if $c=2j+\ve$ with $\ve=0$ or $1$, then 
\[\alpha^c_3=(\alpha^2_3)^j\alpha^\ve_3
=( 4\alpha^3_2-27\alpha^2_1\gamma^2_2)^j\alpha^\ve_3.
\]
Hence, it is enough to include those elements with
$c=0,1$. 

\medskip
We also have 
$(\gamma_1\gamma_2)^2=\gamma^2_1\gamma^2_2$.
If $f=2j+\ve$ with $\ve=0$ or $1$, then
\[
(\gamma_1\gamma_2)^f
=(\gamma^2_1)^j(\gamma^2_2)^j(\gamma_1\gamma_2)^\ve.
\]
Thus we may assume $f=0$ or $1$.
This shows that $\Binv$ spans $(\calq^{U_n})^{\sym3}$.

\medskip
Next we show that $\Binv$ is linearly independent.
It suffices to show that the elements of $\Binv$ have distinct leading monomials.
First, we list the leading monomials of
$\alpha_1$, $\alpha_2$, $\alpha_3$, $\gamma_1$ and $\gamma_2$: 
\begin{equation*}
\begin{array}{|c|c|}\hline
f &\LM(f)\\ \hline
\alpha_1&x_{11}x_{12}x_{13}\\ \hline
\alpha_2&x^2_{11}x^2_{12}x^2_{23}\\ \hline
\alpha_3&x^3_{11}x^3_{12}x^3_{23}\\ \hline
\gamma_1&x_{11}x_{22}x_{33}\\ \hline
\gamma_2&x^2_{11}x_{12}x_{22}x^2_{23}\\ \hline
\end{array}
\end{equation*}

We now let $a,b,d,e\in\Z_{\ge 0}$, $c,f\in\{0,1\}$, and let
\[
v=\alpha^a_1\alpha^b_2\alpha^c_3\gamma_1^{2d+f}\gamma_2^{2e+f}.
\]
Then, by the table above, we have
\[
\LM(v)=x_{11}^{s_{11}}x_{12}^{s_{12}}x_{13}^{s_{13}}
x_{22}^{s_{22}}x_{23}^{s_{23}}x_{33}^{s_{33}}
\]
where 
\begin{align*}
s_{11}&=a+2b+3c+2d+4e+3f,\\
s_{12}&=a+2b+3c+2e+f,\\
s_{13}&=a,\\
s_{22}&=2d+2e+2f,\\
s_{23}&=2b+3c+4e+2f,\\
s_{33}&=2d+f.
\end{align*}
From these relations, we have
\begin{equation*}
c=s_{23} \bmod 2,\quad
f=s_{33} \bmod 2,
\end{equation*}
and
\begin{equation*}
a=s_{13},\quad
b=\frac{s_{23}-2s_{22}+2s_{33}-3c}2,\quad
d=\frac{s_{33}-f}2,\quad
e=\frac{s_{22}-s_{33}-f}2,
\end{equation*}
which imply that $\LM(v)$ uniquely determines $a,b,c,d,e$ and $f$.
\end{proof}

\subsection{Highest weight vectors in $S^3(S^m(\C^n)$ and in $\Lambda^3(S^m(\C^n)$}

Let us denote by $\calq^{\sym3}_{m,D}$ (resp. $\calq^{\sgn}_{m,D}$) the $\rho^D_n$-isotypic component
in $\calq^{\sym3}_m\cong S^3(S^m(\C^n))$ (resp. $\calq^{\sgn}_m\cong\Lambda^3(S^m(\C^n))$).
The nonzero vectors in $(\calq^{\sym3}_{m,D})^{U_n}$
(resp. $(\calq^{\sgn}_{m,D})^{U_n}$) are precisely the the $\Gln$-highest weight vectors in 
$\calq^{\sym3}_{m,D}\cong S^3(S^m(\C^n))$
(resp. $ \calq^{\sgn}_{m,D}\cong \Lambda^3(S^m(\C^n))$).
Hence, the multiplicity of $\rho^D_n$ in $S^3(S^m(\C^n))$ and in $\Lambda^3(S^m(\C^n))$ are given by
\[
\dim\Hom_\Gln(\rho^D_n,S^3(S^m(\C^n)))=\dim(\calq^{\sym3}_{m,D})^{U_n}
\]
and
\[
\dim\Hom_\Gln(\rho^D_n,\Lambda^3(S^m(\C^n)))=\dim(\calq^{\sgn}_{m,D})^{U_n}.
\]

\medskip
We now use the main theorem to obtain a basis for $(\calq^{\sym3}_{m,D})^{U_n}$
and a basis for $(\calq^{\sgn}_{m,D})^{U_n}$.
Recall from equation \eqref{eq:grwt} that if $f\in \calq_m$ and $t.f=\psi^\lambda(t)f$,
then we write $\Gr(f)=m$ and $\wt(f)=\lambda$.
We also denote the cardinality of a finite set $C$ by $\#(C)$.

\begin{cor} 
\begin{enumerate}[(i)]
\item
The set
\[
\Binv(m,D)=\set{f\in\Binv}{\Gr(f)=m,\ \wt(f)=D}
\]
is a basis for $(\calq^{\sym3}_{m,D})^{U_n}$.
Consequently, 
\[
\dim\Hom_\Gln(\rho^D_n,S^3(S^m(\C^n)))=\#(\Binv(m,D)).
\]

\item
The set 
\[
\Bsgn(m,D)=\set{f\in\Bsgn}{\Gr(f)=m,\ \wt(f)=D}
\]
is a basis for $(\calq^{\sym3}_{m,D})^{U_n}$.
Consequently, 
\[
\dim\Hom_\Gln(\rho^D_n,S^3(S^m(\C^n)))=\#(\Bsgn(m,D)).
\eqed
\]
\end{enumerate}
\end{cor}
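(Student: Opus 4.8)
The plan is to deduce the corollary directly from the Main Theorem together with the identifications established in Section~\ref{sec:uninv}. First I would recall from equations~\eqref{eq:qskundecom} and~\eqref{eq:qsgnundecom}, together with~\eqref{eq:qs3uncommute}, that the $U_n$-invariants decompose compatibly with both the grading by $m$ and the $A_n$-weight, so that
\[
(\calq^{\sym3})^{U_n}=\bigoplus_{m,D}(\calq^{\sym3}_{m,D})^{U_n},
\qquad
(\calq^{\sgn})^{U_n}=\bigoplus_{m,D}(\calq^{\sgn}_{m,D})^{U_n},
\]
where in each summand $\Gr$ is constant equal to $m$ and $\wt$ is constant equal to $D$. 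Since the Main Theorem asserts that $\Binv$ (resp.\ $\Bsgn$) is a basis for the whole space $(\calq^{\sym3})^{U_n}$ (resp.\ $(\calq^{\sgn})^{U_n}$), the subset of basis vectors lying in a given graded/weight component is a basis for that component, provided each basis vector is itself $\Gr$- and $\wt$-homogeneous. Thus the one genuine point to verify is that every element of $\Binv$ and of $\Bsgn$ is a simultaneous eigenvector for $A_n$ and homogeneous for the grading.

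To see this I would check that each generator is homogeneous of a definite degree and weight: $\alpha_1$, $\alpha_2$, $\alpha_3$, $\gamma_1$, $\gamma_2$ are each products of the polynomials $\delta_T$, which by construction are bihomogeneous (a $j$-column minor $\delta_{ij}$ has $\Gr$-degree $1$ in each of two matrix columns and is an $A_n$-eigenvector); concretely $\Gr(\alpha_1)=\Gr(\gamma_1)=1$, $\Gr(\alpha_2)=\Gr(\gamma_2)=2$, $\Gr(\alpha_3)=3$, and the weights can be read off from the leading-monomial table in the proof of the Main Theorem since a $U_n$-invariant has the same weight as its leading monomial. Hence an arbitrary monomial $\alpha_1^a\alpha_2^b\alpha_3^c\gamma_1^{2d+f}\gamma_2^{2e+f}$ has $\Gr=a+2b+3c+(2d+f)+2(2e+f)$ and a well-defined weight, so it lies in a single component $(\calq^{\sym3}_{m,D})^{U_n}$; the same applies verbatim to the two families defining $\Bsgn$. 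Therefore $\Binv(m,D)=\Binv\cap(\calq^{\sym3}_{m,D})^{U_n}$ and likewise for $\Bsgn(m,D)$, and each is a basis of the corresponding component. The multiplicity formulas then follow from $\dim\Hom_{\Gln}(\rho^D_n,V)=\dim(V_D)^{U_n}$ for a polynomial $\Gln$-module $V$, applied to $V=S^3(S^m(\C^n))$ and $V=\Lambda^3(S^m(\C^n))$.

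I do not expect any serious obstacle here: the corollary is essentially a bookkeeping consequence of the Main Theorem, and the only thing requiring care is the homogeneity claim, which is routine given the explicit descriptions of $\alpha_i,\gamma_j$ as products of minors $\delta_T$. (One should also flag, as an erratum, that the statement of part~(ii) as printed refers to $S^3(S^m(\C^n))$ rather than $\Lambda^3(S^m(\C^n))$; the intended statement is the $\Bsgn$/$\Lambda^3$ analogue of part~(i).)
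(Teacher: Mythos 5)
Your argument is correct and is essentially the paper's own: the corollary is presented there as an immediate bookkeeping consequence of the Main Theorem, since each element of $\Binv$ and $\Bsgn$ is homogeneous for $\Gr$ and is an $A_n$-weight vector, so the basis restricts to a basis of each component $(\calq^{\sym3}_{m,D})^{U_n}$ (resp.\ $(\calq^{\sgn}_{m,D})^{U_n}$), and the multiplicity equals the dimension of that space of highest weight vectors. Your observation that part~(ii) as printed should read $\Bsgn$, $(\calq^{\sgn}_{m,D})^{U_n}$ and $\Lambda^3(S^m(\C^n))$ is also correct.
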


\section{Examples}

We now compute several examples.
For a set of special elements $f$, we specify $\Gr(f)$ and $\wt(f)$ in the table below:
\begin{equation*}
\begin{array}{|c|c|c|}\hline
f &\Gr(f)&\wt(f)\\ \hline
\alpha_1&1&(3)\\ \hline
\gamma_1&1&(1,1,1)\\ \hline
\alpha_2&2&(4,2)\\ \hline
\gamma_2&2&(3,3)\\ \hline
\alpha_3&3&(6,3)\\ \hline
\gamma^2_1&2&(2,2,2)\\ \hline
\gamma_1\gamma_2&3&(4,4,1)\\ \hline
\gamma^2_2&4&(6,6)\\ \hline
\end{array}
\end{equation*}
If $f=\alpha^a_1\alpha^b_2\alpha^c_3\gamma_1^{2d+f} \gamma_2^{2e+f}$, then 
\begin{align*}
\Gr(f)&=a+2b+3c+2d+4e+3f, \\
\wt(f)&=(3a+4b+6c+2d+6e+4f, 2b+3c+2d+6e+4f, 2d+f).
\end{align*}

\begin{example}[Highest weight vectors in $S^3(S^1(\C^n))$]
The only element $f$ of $\Binv$ such that $\Gr(f)=1$ is $f=\alpha_1$.
Hence, $S^3(S^1(\C^n))$ is irreducible and $S^3(S^1(\C^n))\cong\rho^{(3)}_n$.
\end{example}

\begin{example}[Highest weight vectors in $\Lambda^3(S^1(\C^n))$]
The only element $f$ of $\Bsgn$ such that $\Gr(f)=1$ is $f=\gamma_1$.
Hence, $\Lambda^3(S^1(\C^n))$ is irreducible and $\Lambda^3(S^1(\C^n))\cong\rho^{(1,1,1)}_n$.
\end{example}

\begin{example}[Highest weight vectors in $S^3(S^2(\C^n))$]
The following table lists all the elements $f$ of $\Binv$ with $\Gr(f)=2$. 
\begin{equation*}
\begin{array}{|c|c|c|}\hline
f &\Gr(f)&\wt(f)\\ \hline
\alpha^2_1&1+1=2&(3)+(3)=(6)\\ \hline
\alpha_2&2&(4,2)\\ \hline
\gamma^2_1&2&(2,2,2)\\ \hline
\end{array}
\end{equation*}
Thus, $S^3(S^2(\C^n))$ has the following decomposition:
\[S^3(S^2(\C^n))\cong\rho^{(6)}_n\oplus \rho^{(4,2)}_n\oplus\rho^{(2,2,2)}_n.\]
\end{example}

\begin{example}[Highest weight vectors in $\Lambda^3(S^2(\C^n))$]
The following table lists  all the elements $f$ of $\bbf_{\sgn}$ with $\Gr(f)=2$. 
\begin{equation*}
\begin{array}{|c|c|c|}\hline
f &\Gr(f)&\wt(f)\\ \hline
\alpha_1\gamma_1&1+1=2&(3)+(1,1,1)=(4,1,1)\\ \hline
\gamma_2&2&(3,3)\\ \hline
\end{array}
\end{equation*}
Thus, $\Lambda^3(S^2(\C^n))$ has the following decomposition:
\[\Lambda^3(S^2(\C^n))\cong\rho^{(4,1,1)}_n\oplus \rho^{(3,3)}_n.\]
\end{example}

\begin{example}[Highest weight vectors in $S^3(S^3(\C^n))$]
The following table lists all the elements $f$ of $\bbf_{\mathrm{inv}}$ with $\Gr(f)=3$. 
\begin{equation*}
\begin{array}{|c|c|c|}\hline
f &\Gr(f)&\wt(f)\\ \hline
\alpha^3_1&1+1+1=3&(3)+(3)+(3)=(9)\\ \hline
\alpha_1\alpha_2&1+2=3&(3)+(4,2)=(7,2)\\ \hline
\alpha_3&3&(6,3)\\ \hline
\alpha_1\gamma_1^2&1+2=3&(3)+(2,2,2)=(5,2,2)\\ \hline
\gamma_1\gamma_2&3&(4,4,1)\\ \hline
\end{array}
\end{equation*}
Thus, $S^3(S^3(\C^n))$ has the following decomposition:
\[S^3(S^3(\C^n))=\rho^{(9)}_n\oplus \rho^{(7,2)}_n\oplus\rho^{(6,3)}_n\oplus \rho^{(5,2,2)}_n\oplus \rho^{(4,4,1)}_n.\]
\end{example}

\begin{example}[Highest weight vectors in $\Lambda^3(S^3(\C^n))$]
The following table lists all the elements $f$ of $\bbf_{\sgn}$ with $\Gr(f)=3$. 
\begin{equation*}
\begin{array}{|c|c|c|}\hline
f &\Gr(f)&\wt(f)\\ \hline
\alpha^2_1\gamma_1&2(1)+1=3&2(3)+(1,1,1)=(7,1,1)\\ \hline
\alpha_1\gamma_2&1+2=3&(3)+(3,3)=(6,3)\\ \hline
\alpha_2\gamma_1&2+1=3&(4,2)+(1,1,1)=(5,3,1)\\ \hline
\gamma^3_1&3(1)=3&3(1,1,1)=(3,3,3)\\ \hline
\end{array}
\end{equation*}
Thus, $\Lambda^3(S^3(\C^n))$ has the following decomposition:
\[\Lambda^3(S^3(\C^n))=\rho^{(7,1,1)}_n\oplus \rho^{(6,3)}_n\oplus\rho^{(5,3,1)}_n\oplus \rho^{(3,3,3)}_n.\]
\end{example}

\begin{example}[Highest weight vectors in $S^3(S^4(\C^n))$]
The following table lists all the elements $f$ of $\bbf_{\mathrm{inv}}$ with $\Gr(f)=4$. 
\begin{equation*}
\begin{array}{|c|c|c|}\hline
f &\Gr(f)&\wt(f)\\ \hline
\alpha^4_1&1+1+1+1=4&4(3)=(12)\\ \hline
\alpha^2_1\alpha_2&2+2=4&(6)+(4,2)=(10,2)\\ \hline
\alpha_1\alpha_3&1+3=4&(3)+(6,3)=(9,3)\\ \hline
\alpha^2_1\gamma_1^2&2+2=4&(6)+(2,2,2)=(8,2,2)\\ \hline
\alpha_1(\gamma_1\gamma_2)&1+3=4&(3)+(4,4,1)=(7,4,1)\\ \hline
\alpha^2_2&2+2=4&2(4,2)=(8,4)\\ \hline
\alpha_2\gamma^2_1&2+2=4&(4,2)+(2,2,2)=(6,4,2)\\ \hline
(\gamma^2_1)^2&2(2)=4&2(2,2,2)=(4,4,4)\\ \hline
\gamma^2_2&4&(6,6)\\ \hline
\end{array}
\end{equation*} 
Thus, $S^3(S^4(\C^n))$ has the following decomposition:
\begin{multline*}
S^3(S^4(\C^n))
=\rho^{(12)}_n\oplus \rho^{(10,2)}_n\oplus\rho^{(9,3)}_n\oplus \rho^{(8,4)}_n\oplus \rho^{(8,2,2)}_n\\
{}\oplus\rho^{(7,4,1)}_n\oplus\rho^{(6,6)}_n \oplus \rho^{(6,4,2)}_n\oplus \rho^{(4,4,4)}_n.
\end{multline*}
\end{example}

\begin{example}[Highest weight vectors in $\Lambda^3(S^4(\C^n))$]
The following table lists all the elements $f$ of $\bbf_{\sgn}$ with $\Gr(f)=4$. 
\begin{equation*}
\begin{array}{|c|c|c|}\hline
f &\Gr(f)&\wt(f)\\ \hline
\alpha^3_1\gamma_1&3(1)+1=4&3(3)+(1,1,1)=(10,1,1)\\ \hline
\alpha^2_1\gamma_2&2+2=4&2(3)+(3,3)=(9,3)\\ \hline
\alpha_1\gamma^3_1&1+3(1)=4&(3)+3(1,1,1)=(6,3,3)\\ \hline
\alpha^2_1\gamma_2&2+2=4&(4,2)+(3,3)=(7,5)\\ \hline
\alpha_1\alpha_2\gamma_1&1+2+1=4&(3)+(4,2)+(1,1,1)=(8,3,1)\\ \hline
\alpha_3\gamma_1&2+2=4&(6,3)+(1,1,1)=(7,4,1)\\ \hline
\gamma^2_1\gamma_2&2(1)+2=4&2(1,1,1)+(3,3)=(5,5,2)\\ \hline
\end{array}
\end{equation*} 
Thus, $\Lambda^3(S^4(\C^n))$ has the following decomposition:
\begin{equation*}
\Lambda^3(S^4(\C^n))
=\rho^{(10,1,1)}_n\oplus \rho^{(9,3)}_n\oplus\rho^{(8,3,1)}_n\oplus \rho^{(7,5)}_n
\oplus \rho^{(7,4,1)}_n\oplus\rho^{(6,3,3)}_n\oplus\rho^{(5,5,2)}_n.
\end{equation*}
\end{example}

\begin{example}[Highest weight vectors in $S^3(S^5(\C^n))$]
The following table lists all the elements $f$ of $\bbf_{\mathrm{inv}}$ with $\Gr(f)=5$. 
\begin{equation*}
\begin{array}{|c|c|c|}\hline
f &\Gr(f)&\wt(f)\\ \hline
\alpha^5_1&5(1)=5&5(3)=(15)\\ \hline
\alpha^3_1\alpha_2&3+2=5&3(3)+(4,2)=(13,2)\\ \hline
\alpha_1\alpha^2_2&1+2(2)=5&(3)+2(4,2)=(11,4)\\ \hline
\alpha^2_1\alpha_3&2+3=5&2(3)+(6,3)=(12,3)\\ \hline
\alpha^3_1\gamma_1^2&3(1)+2=5&3(3)+(2,2,2)=(11,2,2)\\ \hline
\alpha_1(\gamma_1^2)^2&1+2(2)=5&(3)+2(2,2,2)=(7,4,4)\\ \hline
\alpha^2_1(\gamma_1\gamma_2)&2(1)+3=5&2(3)+(4,4,1)=(10,4,1)\\ \hline
\alpha_1\gamma^2_2&1+4=5&(3)+(6,6)=(9,6)\\ \hline
\alpha_1\alpha_2\gamma^2_1&1+2+2&(3)+(4,2)+(2,2,2)=(9,4,2)\\ \hline
\alpha_2\alpha_3&2+3=5&(4,2)+(6,3)=(10,5)\\ \hline
\alpha_2(\gamma_1\gamma_2)&2+3=5&(4,2)+(4,4,1)=(8,6,1)\\ \hline
\alpha_3\gamma^2_1&3+2=5&(6,3)+(2,2,2)=(8,5,2)\\ \hline
\gamma^2_1(\gamma_1\gamma_2)&2+3=5&(2,2,2)+(4,4,1)=(6,6,3)\\ \hline
\end{array}
\end{equation*}
Thus, $S^3(S^5(\C^n))$ has the following decomposition:
\begin{multline*}
S^3(S^5(\C^n))
=\rho^{(15)}_n\oplus \rho^{(13,2)}_n\oplus \rho^{(12,3)}_n\oplus\rho^{(11,4)}_n\oplus\rho^{(11,2,2)}_n
\oplus \rho^{(10,5)}_n\oplus\rho^{(10,4,1)}_n\\
{}\oplus \rho^{(9,6)}_n\oplus\rho^{(9,4,2)}_n\oplus\rho^{(8,6,1)}_n\oplus \rho^{(8,5,2)}_n\oplus \rho^{(7,4,4)}_n\oplus\rho^{(6,6,3)}_n.
\end{multline*}
\end{example}

\begin{example}[Highest weight vectors in $\Lambda^3(S^5(\C^n))$]
The following table lists all the elements $f$ of $\bbf_{\sgn}$ with $\Gr(f)=5$. 
\begin{equation*}
\begin{array}{|c|c|c|}\hline
f &\Gr(f)&\wt(f)\\ \hline
\alpha^4_1\gamma_1&4(1)+1=5&4(3)+(1,1,1)=(13,1,1)\\ \hline
\alpha^3_1\gamma_2&3+2=5&3(3)+(3,3)=(12,3)\\ \hline
\alpha^2_1\gamma_1^3&2(1)+3(1)=5&2(3)+3(1,1,1)=(9,3,3)\\ \hline
\alpha^2_1\alpha_2\gamma_1&2(1)+2+1=5&2(3)+(4,2)+(1,1,1)=(11,3,1)\\ \hline
\alpha_1\alpha_2\gamma_2&1+2+2=5&(3)+(4,2)+(3,3)=(10,5)\\ \hline
\alpha_1\gamma_1^2\gamma_2&1+2(1)+2=5&(3)+2(1,1,1)+(3,3)=(8,5,2)\\ \hline
\alpha_1\alpha_3\gamma_1&1+3+1=5&(3)+(6,3)+(1,1,1)=(10,4,1)\\ \hline
\alpha^2_2\gamma_1&2(2)+1=5&2(4,2)+(1,1,1)=(9,5,1)\\ \hline
\alpha_2\gamma^3_1&2+3(1)=5&(4,2)+3(1,1,1)=(7,5,3)\\ \hline
\alpha_3\gamma_2&3+2=5&(6,3)+(3,3)=(9,6)\\ \hline
\gamma_1^5&5(1)=5&5(1,1,1)=(5,5,5)\\ \hline
\gamma_1\gamma^2_1&1+2(2)=5&(1,1,1)+2(3,3)=(7,7,1)\\ \hline
\end{array}
\end{equation*}
Thus, $\Lambda^3(S^5(\C^n))$ has the following decomposition:
\begin{multline*}
\Lambda^3(S^5(\C^n))
=\rho^{(13,1,1)}_n\oplus \rho^{(12,3)}_n\oplus \rho^{(11,3,1)}_n\oplus\rho^{(10,5)}_n
\oplus\rho^{(10,4,1)}_n\oplus\rho^{(9,6)}_n\\
{}\oplus\rho^{(9,5,1)}_n\oplus\rho^{(9,3,3)}_n\oplus \rho^{(8,5,2)}_n\oplus\rho^{(7,7,1)}_n\oplus \rho^{(7,5,3)}_n\oplus \rho^{(5,5,5)}_n.
\end{multline*}
\end{example}

\begin{example}[Highest weight vectors in $S^3(S^6(\C^n))$]
The following table lists all the elements $f$ of $\bbf_{\mathrm{inv}}$ with $\Gr(f)=6$. 
\begin{equation*}
\begin{array}{|c|c|c|}\hline
f &\Gr(f)&\wt(f)\\ \hline
\alpha^6_1&6(1)=6&6(3)=(18)\\ \hline
\alpha^4_1\alpha_2&4(1)+2=6&4(3)+(4,2)=(16,2)\\ \hline
\alpha^2_1\alpha^2_2&2(1)+2(2)=6&2(3)+2(4,2)=(14,4)\\ \hline
\alpha^3_1\alpha_3&3(1)+3=6&3(3)+(6,3)=(15,3)\\ \hline
\alpha^4_1\gamma_1^2&4(1)+2=6&4(3)+(2,2,2)=(14,2,2)\\ \hline
\alpha^2_1(\gamma_1^2)^2&2(1)+2(2)=6&2(3)+2(2,2,2)=(10,4,4)\\ \hline
\alpha^3_1(\gamma_1\gamma_2)&3(1)+3=6&3(3)+(4,4,1)=(13,4,1)\\ \hline
\alpha^2_1\gamma^2_2&2(1)+4=6&2(3)+(6,6)=(12,6)\\ \hline
\alpha^2_1\alpha_2\gamma^2_1&2(1)+2+2=6&2(3)+(4,2)+(2,2,2)=(12,4,2)\\ \hline
\alpha_1\alpha_2\alpha_3&1+2+3=6&(3)+(4,2)+(6,3)=(13,5)\\ \hline
\alpha_1\alpha_2(\gamma_1\gamma_2)&1+2+3=6&(3)+(4,2)+(4,4,1)=(11,6,1)\\ \hline
\alpha_1\alpha_3\gamma^2_1&1+3+2=6&(3)+(6,3)+(2,2,2)=(11,5,2)\\ \hline
\alpha_1\gamma^2_1(\gamma_1\gamma_2)&1+2+3=6&(3)+(2,2,2)+(4,4,1)=(9,6,3)\\ \hline
\alpha^3_2&3(2)=6&3(4,2)=(12,6)\\ \hline
\alpha^2_2\gamma^2_1&2(2)+2=6&2(4,2)+(2,2,2)=(10,6,2)\\ \hline
\alpha_2(\gamma^2_1)^2&2+2(2)=6&(4,2)+2(2,2,2)=(8,6,4)\\ \hline
\alpha_2\gamma^2_2&2(1)+4=6&(4,2)+(6,6)=(10,8)\\ \hline
\alpha_3(\gamma_1\gamma_2)&3+3=6&(6,3)+(4,4,1)=(10,7,1)\\ \hline
(\gamma_1^2)^3&3(2)=6&3(2,2,2)=(6,6,6)\\ \hline
\gamma^2_1\gamma^2_2&2+4=6&(2,2,2)+(6,6)=(8,8,2)\\ \hline
\end{array}
\end{equation*}
Thus, $S^3(S^6(\C^n))$ has the following decomposition:
\begin{multline*}
S^3(S^6(\C^n))
=\rho_n^{(18)}\oplus\rho_n^{(16, 2)}\oplus \rho_n^{(15, 3)}\oplus \rho_n^{(14, 4)}\oplus \rho_n^{(14, 2, 2)}\oplus \rho_n^{(13, 5)}\oplus \rho_n^{(13, 4, 1)}
\\
\oplus 2\rho_n^{(12, 6)}\oplus \rho_n^{(12, 4, 2)}\oplus \rho_n^{(11, 6, 1)}\oplus \rho_n^{(11, 5, 2)}\oplus \rho_n^{(10, 8)} \oplus \rho_n^{(10, 7, 1)}
\\
\oplus \rho_n^{(10, 6, 2)} \oplus \rho_n^{(10, 4, 4)} \oplus \rho_n^{(9, 6, 3)}\oplus \rho_n^{(8, 8, 2)}\oplus \rho_n^{(8, 6, 4)} \oplus\rho_n^{(6, 6, 6)}.
\end{multline*}
\end{example}

\begin{example}[Highest weight vectors in $\Lambda^3(S^6(\C^n))$]
The following table lists all the elements $f$ of $\Bsgn$ with $\Gr(f)=6$. 
\begin{equation*}
\begin{array}{|c|c|c|}\hline
f &\Gr(f)&\wt(f)\\ \hline
\alpha_1^5\gamma_1&5+1=6&5(3)+(1,1,1)=(16,1,1)\\ \hline
\alpha_1^3\alpha_2\gamma_1&3+2+1=6&3(3)+(4,2)+(1,1,1)=(14,3,1)\\ \hline
\alpha_1^2\alpha_3\gamma_1&2+3+1=6&2(3)+(6,3)+(1,1,1)=(13,4,1)\\ \hline
\alpha_1\alpha_2^2\gamma_1&1+2(2)+1=6&(3)+2(4,2)+(1,1,1)=(12,5,1)\\ \hline
\alpha_2\alpha_3\gamma_1&2+3+1=6&(4,2)+(6,3)+(1,1,1)=(11,6,1)\\ \hline
\alpha_1\gamma_1\gamma_2^2&1+1+4=6&(3)+(1,1,1)+(6,6)=(10,7,1)\\ \hline
\alpha_1^3\gamma_1^3&3+3=6&3(3)+3(1,1,1)=(12,3,3)\\ \hline
\alpha_1\alpha_2\gamma_1^3&1+2+3=6&(3)+(4,2)+3(1,1,1)=(10,5,3)\\ \hline
\alpha_3\gamma_1^3&3+3=6&(6,3)+3(1,1,1)=(9,6,3)\\ \hline
\alpha_1\gamma_1^5&1+5=6&(3)+5(1,1,1)=(8,5,5)\\ \hline
\alpha_1^4\gamma_2&4+2=6&4(3)+(3,3)=(15,3)\\ \hline
\alpha_1^2\alpha_2\gamma_2&2+2+2=6&(6)+(4,2)+(3,3)=(13,5)\\ \hline
\alpha_1\alpha_3\gamma_2&1+3+2=6&(3)+(6,3)+(3,3)=(12,6)\\ \hline
\alpha_2^2\gamma_2&2+2+2=6&2(4,2)+(3,3)=(11,7)\\ \hline
\alpha_1^2\gamma_1^2\gamma_2&2+2+2=6&2(3)+2(1,1,1)+(3,3)=(11,5,2)\\ \hline
\alpha_2\gamma_1^2\gamma_2&2+2+2=6&(4,2)+2(1,1,1)+(3,3)=(9,7,2)\\ \hline
\gamma_1^4\gamma_2&4+2=6&4(1,1,1)+(3,3)=(7,7,4)\\ \hline
\gamma_2^3&3(2)=6&3(3,3)=(9,9)\\ \hline
\end{array}
\end{equation*}
Thus, $\Lambda^3(S^6(\C^n))$ has the following decomposition:
\begin{multline*}
\Lambda^3(S^6(\C^n))
= \rho_n^{(16, 1, 1)}\oplus \rho_n^{(15, 3)}\oplus \rho_n^{(14, 3, 1)}\oplus \rho_n^{(13, 5)}\oplus \rho_n^{(13, 4, 1)}
\\
\oplus \rho_n^{(12, 6)}\oplus \rho_n^{(12, 5, 1)}\oplus \rho_n^{(12, 3, 3)}\oplus \rho_n^{(11, 7)}\oplus \rho_n^{(11, 6, 1)}\oplus \rho_n^{(11, 5, 2)}
\\
\oplus \rho_n^{(10, 7, 1)}\oplus \rho_n^{(10, 5, 3)}\oplus \rho_n^{(9, 9)} \oplus \rho_n^{(9, 7, 2)} \oplus \rho_n^{(9, 6, 3)} \oplus \rho_n^{(8, 5, 5)} \oplus\rho_n^{(7, 7, 4)}.
\end{multline*}
\end{example}

\begin{remark}
For a Young diagram $D=(\lambda_1,\lambda_2,\lambda_3)$ with $|D|=3m$,
the multiplicity of $\rho_n^D$ in $S^3(S^m(\C^n))$ is equal to the number of tuples
$(a,b,d,e,c,f)\in\Z_{\ge0}^4\times\{0,1\}^2$ satisfying
\begin{equation*}
\wt(\alpha^a_1\alpha^b_2\alpha^c_3\gamma_1^{2d+f}\gamma_2^{2e+f})=D,
\end{equation*}
which is equivalent to the equations
\begin{equation}\label{equation for multiplicity}
\begin{split}
3a+4b+6c+2d+6e+4f&=\lambda_1, \\
2b+3c+2d+6e+4f&=\lambda_2, \\
2d+f&=\lambda_3.
\end{split}
\end{equation}
The solutions of \eqref{equation for multiplicity} lying in $\Z_{\ge0}^4\times\{0,1\}^2$ are explicitly given by
\begin{gather*}
a=\lambda_1-2k,\quad
b=3k-\lambda_1-2\lambda_2+3\floor{\frac{\lambda_2}2},\quad
d=\floor{\frac{\lambda_3}2},\\
e=\frac{\lambda_1+\lambda_2-2\lambda_3}3-k+\floor{\frac{\lambda_3}2},\quad
c=\lambda_2 \bmod 2,\quad
f=\lambda_3 \bmod 2
\end{gather*}
for $k\in\Z$ whenever $a,b,e\ge0$, where $\floor x$ is the largest integer not exceeding $x$.
Hence we have
\begin{align*}
&\eqspace\dim\Hom_\Gln(\rho^D_n,S^3(S^m(\C^n))) \\
&{}=\#\set{k\in\Z}{
\frac{\lambda_1+2\lambda_2}3-\floor{\frac{\lambda_2}2} \le k \le \min\biggl\{
\floor{\frac{\lambda_1}2},\;\frac{\lambda_1+\lambda_2-2\lambda_3}3+\floor{\frac{\lambda_3}2}
\biggr\}} \\
&{}=\floor{\frac{\min\{\lambda_1-\lambda_2,\lambda_2-\lambda_3\}}6+\frac{2\lambda_1+\lambda_2}6}
+\floor{\frac{\lambda_2}2}+\floor{-\frac{\lambda_1+2\lambda_2}3}+1.
\end{align*}
Similarly, we also have
\begin{align*}
&\eqspace\dim\Hom_\Gln(\rho^D_n,\Lambda^3(S^m(\C^n))) \\
&{}=\floor{\frac{\min\{\lambda_1-\lambda_2,\lambda_2-\lambda_3\}}6+\frac{2\lambda_1+\lambda_2}6+\frac12}
+\floor{\frac{\lambda_2+1}2}+\floor{-\frac{\lambda_1+2\lambda_2}3}.
\end{align*}
We note that both of these multiplicities are of the form
\begin{equation*}
\floor{\frac{\min\{\lambda_1-\lambda_2,\lambda_2-\lambda_3\}}6}+\ve,\quad
\ve\in\{-1,0,1\}.
\end{equation*}
\end{remark}

\end{document}